  \theoremstyle{plain}
  \newtheorem{lem}{\protect\lemmaname}[section]
  \theoremstyle{plain}
  \newtheorem{prop}{\protect\propositionname}[section]
  \theoremstyle{definition}
  \newtheorem{defn}{\protect\definitionname}[section]
  \theoremstyle{plain}
  \newtheorem{thm}{\protect\theoremname}[section]
  \theoremstyle{definition}
  \newtheorem{example}{\protect\examplename}[section]
  \theoremstyle{remark}
  \newtheorem*{rem*}{\protect\remarkname}
  \providecommand{\definitionname}{Definition}
  \providecommand{\examplename}{Example}
  \providecommand{\lemmaname}{Lemma}
  \providecommand{\propositionname}{Proposition}
  \providecommand{\remarkname}{Remark}
\providecommand{\theoremname}{Theorem}
\begin{document}

\title{On the maximum of a type of random processes}

\author{Xuan Liu\thanks{Mathematical Institute, University of Oxford, Oxford, OX2 6GG, United
Kingdom. Email: xuan.liu@maths.ox.ac.uk}}

\date{March 11, 2016}
\maketitle
\begin{abstract}
Let $\{\Omega,\{\mathcal{F}_{t}\},\mathcal{F},\mathbb{P}\}$ be a
filtered probability space satisfying the usual conditions. We consider
random processes $X_{t}$, $t\in[0,T]$, which satisfy the following
condition:
\begin{equation}
\mathbb{E}\left(\left|\mathbb{E}\left(X_{t}\big|\mathcal{F}_{s}\right)-X_{s}\right|^{p}\right)\le A_{p,h}|t-s|^{ph},\quad\text{for all }0\le s<t\le T.\label{Eqn-17}
\end{equation}
where $p>1$ and $h\in(0,1]$ are some constants satisfying $ph>1$,
and $A_{p,h}$ is a constant depending only on $p$ and $h$. Typical
examples of such processes are martingales and processes with the
following increment control: 

\begin{equation}
\mathbb{E}\left(\left|X_{t}-X_{s}\right|^{p}\right)\le A_{p,h}|t-s|^{ph},\quad\text{for all }s,t\in[0,T],\label{Eqn}
\end{equation}
 We are interested in estimate of the tail probability of the supremum
\begin{equation}
\mathbb{P}\left(\sup_{t\in[0,T]}\left|X_{t}\right|\ge\lambda\right),\label{Eqn-1}
\end{equation}
for which we will show that a Doob type inequality (see Theorem \ref{Prop-1})
holds for processes satisfying (\ref{Eqn-17}). As an application,
we show that with the condition (\ref{Eqn}) given, the decay of (\ref{Eqn-1})
behaves (roughly speaking) in the same manner as the marginal
\[
\mathbb{P}\left(|X_{t}|\ge\lambda\right).
\]

\end{abstract}

\section{A Doob type maximal inequality}
\begin{lem}
\label{Lemma-1}For any $s_{0}$, $t_{0}\in[0,T]$, $s_{0}<t_{0}$,
it holds that 
\[
\mathbb{E}\left(\sup_{s_{0}\le s<t\le t_{0}}\left|\mathbb{E}(X_{t}\big|\mathcal{F}_{s})-X_{s}\right|^{p}\right)\le C_{p,h,\theta}A_{p,h}|t_{0}-s_{0}|^{ph},
\]
where 
\[
C_{p,h,\theta}=[2\zeta(\theta)]^{p-1}\left(\frac{p}{p-1}\right)^{p}\left(\frac{4}{ph-1}\right)^{\theta(p-1)+1}\Gamma\left[\theta(p-1)+1\right],
\]
 with an arbitrary constant $\theta>1$, $\zeta(\theta)=\sum_{m=1}^{\infty}m^{-\theta}$
is the Riemann zeta function and $\Gamma(z)$ is the Gamma function.\end{lem}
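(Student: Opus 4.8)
The plan is to run a dyadic chaining argument in which the conditional‑expectation structure is used to collapse everything onto a single dyadic grid, with one application of Doob's $L^p$ maximal inequality per scale. Write $\Delta_{s,t}:=\mathbb{E}(X_t\mid\mathcal{F}_s)-X_s$, so that the hypothesis (\ref{Eqn-17}) reads $\mathbb{E}|\Delta_{s,t}|^p\le A_{p,h}|t-s|^{ph}$. The first step is an exact additive decomposition: for any partition $s=u_0<u_1<\cdots<u_m=t$, the tower property yields
\[
\Delta_{s,t}=\sum_{j=0}^{m-1}\mathbb{E}\big(\Delta_{u_j,u_{j+1}}\,\big|\,\mathcal{F}_s\big),
\]
since $\mathbb{E}(X_{u_m}-X_s\mid\mathcal{F}_s)=\sum_j\mathbb{E}(X_{u_{j+1}}-X_{u_j}\mid\mathcal{F}_s)$ and each term equals $\mathbb{E}(\Delta_{u_j,u_{j+1}}\mid\mathcal{F}_s)$ after conditioning the inner $\mathcal{F}_{u_j}$‑expectation down to $\mathcal{F}_s$. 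This identity replaces the two‑parameter object $\Delta_{s,t}$ by a sum of increments over a freely chosen partition, all conditioned on the single $\sigma$‑algebra $\mathcal{F}_s$.

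Next I set up the dyadic grid $t_{n,k}=s_0+k2^{-n}(t_0-s_0)$ and the scale maxima $M_n:=\max_{0\le k<2^n}|\Delta_{t_{n,k},t_{n,k+1}}|$. Using the standard fact that any dyadic subinterval $[s,t]\subseteq[s_0,t_0]$ decomposes into dyadic intervals with at most two of each generation $n\ge1$, the additive formula together with $|\Delta_{u_j,u_{j+1}}|\le M_n$ for a generation‑$n$ piece gives, for every dyadic pair $s<t$,
\[
|\Delta_{s,t}|\le 2\sum_{n\ge1}\mathbb{E}(M_n\mid\mathcal{F}_s)\le 2\sum_{n\ge1}M_n^{*},\qquad M_n^{*}:=\sup_{s_0\le s'\le t_0}\mathbb{E}(M_n\mid\mathcal{F}_{s'}).
\]
The crux of the argument is precisely this step: the dependence on the free lower endpoint $s$ is absorbed into $M_n^{*}$, the running maximum of the càdlàg nonnegative martingale $s'\mapsto\mathbb{E}(M_n\mid\mathcal{F}_{s'})$ (closed at $t_0$ by $M_n$, which is $\mathcal{F}_{t_0}$‑measurable). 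The supremum over dyadic pairs — which may be taken over this countable family via monotone convergence over finite subfamilies, and which controls the stated supremum under the standing regularity — then satisfies $\sup_{s<t}|\Delta_{s,t}|\le 2\sum_{n\ge1}M_n^{*}$ pointwise. The main obstacle is arranging the decomposition so that all conditionings are uniform in $s$, thereby turning the awkward family $\{\mathbb{E}(M_n\mid\mathcal{F}_s)\}_s$ into a single maximal function amenable to Doob's inequality.

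It remains to estimate $\mathbb{E}[(\sum_n M_n^{*})^p]$. A weighted Hölder inequality with weights $n^{-\theta}$ gives, pointwise,
\[
\Big(\sum_{n\ge1}M_n^{*}\Big)^p\le\zeta(\theta)^{p-1}\sum_{n\ge1}n^{\theta(p-1)}(M_n^{*})^p,
\]
which accounts for the $\zeta(\theta)^{p-1}$. Doob's $L^p$ maximal inequality bounds $\mathbb{E}[(M_n^{*})^p]\le(p/(p-1))^p\,\mathbb{E}[M_n^p]$, supplying the $(p/(p-1))^p$, while a union bound over the $2^n$ generation‑$n$ intervals gives $\mathbb{E}[M_n^p]\le A_{p,h}|t_0-s_0|^{ph}2^{-n(ph-1)}$. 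Combining these reduces the estimate to the scalar series $\sum_{n\ge1}n^{\theta(p-1)}2^{-n(ph-1)}$; comparing it with the integral $\int_0^{\infty}x^{\theta(p-1)}2^{-(ph-1)x}\,dx=\Gamma(\theta(p-1)+1)/\big((ph-1)\ln2\big)^{\theta(p-1)+1}$ and absorbing $(\ln2)^{-1}$ and the sum‑to‑integral discretization error into the factor $4$ produces $\big(\tfrac{4}{ph-1}\big)^{\theta(p-1)+1}\Gamma(\theta(p-1)+1)$. Finally, the power $2^p$ from the bound $\sup|\Delta_{s,t}|\le 2\sum_n M_n^{*}$ combines with $\zeta(\theta)^{p-1}$ (half a power of two being shifted into the series estimate) to form $[2\zeta(\theta)]^{p-1}$, assembling exactly the stated constant $C_{p,h,\theta}$.
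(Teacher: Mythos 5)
Your proposal is correct and takes essentially the same route as the paper's own proof: the dyadic decomposition of $[s,t]$ with at most two intervals per generation, the tower-property telescoping of $\mathbb{E}(X_t\mid\mathcal{F}_s)-X_s$, the $m^{-\theta}$-weighted Jensen step producing $\zeta(\theta)^{p-1}$, Doob's $L^p$ maximal inequality, a union bound over the $2^n$ intervals of each scale, and the same Gamma-integral comparison for $\sum_n n^{\theta(p-1)}2^{-n(ph-1)}$. The only differences are immaterial: you close a single martingale per scale with $M_n=\max_l|\xi_l^n|$ and apply Doob once, where the paper applies Doob to each of the $2^m$ martingales $r\mapsto\mathbb{E}\left(|\xi_l^m|\,\big|\,\mathcal{F}_r\right)$ and then sums (yielding your extra factor $2$, which indeed fits inside the slack of the $\left(4/(ph-1)\right)^{\theta(p-1)+1}$ bound since $\theta(p-1)>ph-1$), and your explicit caveat about passing from dyadic pairs to the full supremum is the same issue the paper silently glosses when it takes the supremum over uncountably many pairs with a null set per pair.
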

\begin{proof}
Let $s,t\in[s_{0},t_{0}]$, $s<t$ be fixed temporarily. Denote by
\[
I_{l}^{m}=[t_{l-1}^{m},t_{l}^{m}]=s_{0}+(t_{0}-s_{0})\times\left[\frac{l-1}{2^{m}},\frac{l}{2^{m}}\right]
\]
 the dyadic sub-intervals of $[s,t]$. Then there exists a sequence
$\{J_{k}\}\subseteq\{I_{l}^{m}:1\le l\le2^{m},m\ge0\}$ such that 

i) $J_{k}$, $k=1,2,\cdots$, are mutually disjoint;

ii) for any $m\ge1$, there are at most two elements of $\{J_{k}\}$
with length $(t_{0}-s_{0})2^{-m}$;

iii) $[s,t]=\cup_{k=1}^{\infty}J_{k}$.\\
Denote $J_{k}=[u_{k-1},u_{k}]$. Then
\begin{align*}
|\mathbb{E}(X_{t}\big|\mathcal{F}_{s})-X_{s}| & =\left|\sum_{k=1}^{\infty}\mathbb{E}\left(\Delta X_{J_{k}}\big|\mathcal{F}_{s}\right)\right|\\
 & \le\sum_{k=1}^{\infty}\mathbb{E}\left[\left|\mathbb{E}\left(\Delta X_{J_{k}}\big|\mathcal{F}_{u_{k-1}}\right)\right|\Big|\mathcal{F}_{s}\right]\\
 & =\sum_{m=1}^{\infty}\sum_{\{J_{k}:|J_{k}|=(t_{0}-s_{0})2^{-m}\}}\mathbb{E}\left[\left|\mathbb{E}\left(\Delta X_{J_{k}}\big|\mathcal{F}_{u_{k-1}}\right)\right|\Big|\mathcal{F}_{s}\right],
\end{align*}
where $\Delta X_{J_{k}}=X_{u_{k}}-X_{u_{k-1}}$. Let $\xi_{l}^{m}=\mathbb{E}\left(\Delta X_{I_{l}^{m}}\big|\mathcal{F}_{t_{l-1}^{m}}\right)$,
$1\le l\le2^{m}$, $m=1,2,\cdots$. For any $\theta>1$, by Jensen's
inequality,
\begin{align*}
|\mathbb{E}(X_{t}\big|\mathcal{F}_{s})-X_{s}|^{p} & \le\left(\sum_{m=1}^{\infty}\frac{1}{\zeta(\theta)m^{\theta}}\cdot\zeta(\theta)m^{\theta}\sum_{\{J_{k}:|J_{k}|=(t_{0}-s_{0})2^{-m}\}}\mathbb{E}\left[\left|\mathbb{E}\left(\Delta X_{J_{k}}\big|\mathcal{F}_{u_{k-1}}\right)\right|\Big|\mathcal{F}_{s}\right]\right)^{p}\\
 & \le\sum_{m=0}^{\infty}\frac{1}{\zeta(\theta)m^{\theta}}\left(\zeta(\theta)m^{\theta}\sum_{\{J_{k}:|J_{k}|=(t_{0}-s_{0})2^{-m}\}}\mathbb{E}\left[\left|\mathbb{E}\left(\Delta X_{J_{k}}\big|\mathcal{F}_{u_{k-1}}\right)\right|\Big|\mathcal{F}_{s}\right]\right)^{p}\\
 & =\zeta(\theta)^{p-1}\sum_{m=0}^{\infty}m^{\theta(p-1)}\left(\sum_{\{J_{k}:|J_{k}|=(t_{0}-s_{0})2^{-m}\}}\mathbb{E}\left[\left|\mathbb{E}\left(\Delta X_{J_{k}}\big|\mathcal{F}_{u_{k-1}}\right)\right|\Big|\mathcal{F}_{s}\right]\right)^{p}\\
 & \le[2\zeta(\theta)]^{p-1}\sum_{m=0}^{\infty}m^{\theta(p-1)}\sum_{\{J_{k}:|J_{k}|=(t_{0}-s_{0})2^{-m}\}}\Bigg(\mathbb{E}\left[\left|\mathbb{E}\left(\Delta X_{J_{k}}\big|\mathcal{F}_{u_{k-1}}\right)\right|\Big|\mathcal{F}_{s}\right]\Bigg)^{p}\\
 & \le[2\zeta(\theta)]^{p-1}\sum_{m=0}^{\infty}m^{\theta(p-1)}\sum_{l=1}^{2^{m}}\sup_{r\in[s_{0},t_{0}]}\left[\mathbb{E}\left(\left|\xi_{l}^{m}\right|\big|\mathcal{F}_{r}\right)\right]^{p},
\end{align*}
 where the inequality in the fourth line is due to the property ii)
of $\{J_{k}\}$. Hence,
\[
\sup_{s_{0}\le s<t\le t_{0}}|\mathbb{E}(X_{t}\big|\mathcal{F}_{s})-X_{s}|^{p}\le[2\zeta(\theta)]^{p-1}\sum_{m=0}^{\infty}m^{\theta(p-1)}\sum_{l=1}^{2^{m}}\sup_{r\in[s_{0},t_{0}]}\left[\mathbb{E}\left(\left|\xi_{l}^{m}\right|\big|\mathcal{F}_{r}\right)\right]^{p}.
\]
By Doob's maximal inequality for martingales, 
\begin{align*}
\mathbb{E}\left(\sup_{s_{0}\le s<t\le t_{0}}|\mathbb{E}(X_{t}\big|\mathcal{F}_{s})-X_{s}|^{p}\right) & \le[2\zeta(\theta)]^{p-1}\sum_{m=1}^{\infty}m^{\theta(p-1)}\sum_{l=1}^{2^{m}}\mathbb{E}\left(\sup_{r\in[s_{0},t_{0}]}\left[\mathbb{E}\left(\left|\xi_{l}^{m}\right|\big|\mathcal{F}_{r}\right)\right]^{p}\right)\\
 & \le[2\zeta(\theta)]^{p-1}\sum_{m=1}^{\infty}m^{\theta(p-1)}\sum_{l=1}^{2^{m}}\left(\dfrac{p}{p-1}\right)^{p}\mathbb{E}\left(\left[\mathbb{E}\left(\left|\xi_{l}^{m}\right|\big|\mathcal{F}_{t_{0}}\right)\right]^{p}\right)\\
 & \le[2\zeta(\theta)]^{p-1}\left(\dfrac{p}{p-1}\right)^{p}\sum_{m=1}^{\infty}m^{\theta(p-1)}\sum_{l=1}^{2^{m}}\mathbb{E}\left(\left|\xi_{l}^{m}\right|^{p}\right)\\
 & \le A_{p,h}[2\zeta(\theta)]^{p-1}\left(\dfrac{p}{p-1}\right)^{p}\sum_{m=1}^{\infty}m^{\theta(p-1)}\cdot2^{m}\cdot\left(\frac{|t_{0}-s_{0}|}{2^{m}}\right)^{ph}\\
 & =C_{p,h,\theta}A_{p,h}|t_{0}-s_{0}|^{ph},
\end{align*}
where $C_{p,h,\theta}=[2\zeta(\theta)]^{p-1}\left(\frac{p}{p-1}\right)^{p}\left[\sum_{m=1}^{\infty}m^{\theta(p-1)}\cdot2^{-m(ph-1)}\right]$.
Note that
\begin{align*}
\sum_{m=1}^{\infty}m^{\theta(p-1)}\cdot2^{-m(ph-1)} & \le\sum_{m=1}^{\infty}2^{\theta(p-1)}\int_{m-1}^{m}r^{\theta(p-1)}e^{-r(ph-1)\log2}dr\\
 & \le\left(\frac{4}{ph-1}\right)^{\theta(p-1)+1}\int_{0}^{\infty}r^{\theta(p-1)}e^{-r}dr\\
 & =\left(\frac{4}{ph-1}\right)^{\theta(p-1)+1}\Gamma\left[\theta(p-1)+1\right].
\end{align*}
This completes the proof.
\end{proof}

As an application of Lemma \ref{Lemma-1}, we show that a Doob-type
inequality holds for processes satisfying the condition (\ref{Eqn-17}).
To this end, we shall need the following elementary result.
\begin{lem}
\label{Lemma-2}Let $Y_{t}$, $t\in[0,T]$, be any right continuous
random process such that $Y_{t}$ is integrable for each $t$, and
let $0\le s_{0}<t_{0}\le T$. Then

1) For any stopping time $\tau$with $s_{0}\le\tau\le t_{0}$, it
holds that
\begin{equation}
\left|\mathbb{E}\left(Y_{t_{0}}\big|\mathcal{F}_{\tau}\right)-Y_{\tau}\right|\le\mathbb{E}\left[\sup_{u\in[s_{0},t_{0}]}\left|\mathbb{E}\left(Y_{t_{0}}\big|\mathcal{F}_{u}\right)-Y_{u}\right|\Big|\mathcal{F}_{\tau}\right].\label{Eqn-22}
\end{equation}

2) For any $\lambda>0$, it holds that
\begin{equation}
\mathbb{P}\left(\sup_{u\in[s_{0},t_{0}]}Y_{u}\ge\lambda\right)\le\frac{1}{\lambda}\int_{\left\{ \sup_{u\in[s_{0},t_{0}]}Y_{u}\ge\lambda\right\} }\left[\sup_{u\in[s_{0},t_{0}]}\left|\mathbb{E}\left(Y_{t_{0}}\big|\mathcal{F}_{u}\right)-Y_{u}\right|+Y_{t_{0}}\right]d\mathbb{P}.\label{Eqn-24}
\end{equation}
\end{lem}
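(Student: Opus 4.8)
The plan is to handle the two parts in turn, organized around the closed martingale $M_u:=\mathbb{E}(Y_{t_0}\mid\mathcal{F}_u)$, $u\in[s_0,t_0]$, which is closed by $Y_{t_0}$ and, since the filtration satisfies the usual conditions, admits a right-continuous modification. Throughout I write $S:=\sup_{u\in[s_0,t_0]}|M_u-Y_u|$ for the quantity appearing on the right-hand sides, and I note that $Y$ being right continuous makes $Y_\tau$ a genuine $\mathcal{F}_\tau$-measurable random variable.

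For part 1) the one substantive ingredient is the optional sampling theorem applied to the uniformly integrable martingale $M$ at the bounded stopping time $\tau$, which gives $\mathbb{E}(Y_{t_0}\mid\mathcal{F}_\tau)=M_\tau$. Consequently the left-hand side of \eqref{Eqn-22} equals $|(M-Y)_\tau|$, and since $\tau(\omega)\in[s_0,t_0]$ for every $\omega$ we have the pointwise domination $|(M-Y)_\tau|\le S$. Because $(M-Y)_\tau$ is $\mathcal{F}_\tau$-measurable while $S\ge0$, conditioning on $\mathcal{F}_\tau$ yields $|(M-Y)_\tau|=\mathbb{E}[\,|(M-Y)_\tau|\mid\mathcal{F}_\tau]\le\mathbb{E}[S\mid\mathcal{F}_\tau]$, which is exactly \eqref{Eqn-22}.

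For part 2) I would run the usual hitting-time argument for maximal inequalities, but pass it through part 1) in order to absorb the failure of the martingale property. Introduce the first-entrance time $\tau:=\inf\{u\in[s_0,t_0]:Y_u\ge\lambda\}\wedge t_0$, which is a stopping time since $Y$ is right continuous and the filtration is right continuous (d\'ebut theorem). Rewriting \eqref{Eqn-22} as a one-sided bound, $Y_\tau=M_\tau-(M_\tau-Y_\tau)\le M_\tau+\mathbb{E}[S\mid\mathcal{F}_\tau]=\mathbb{E}[Y_{t_0}+S\mid\mathcal{F}_\tau]$. Provided one can show that on the event $A:=\{\sup_{u}Y_u\ge\lambda\}$ one has $Y_\tau\ge\lambda$, together with $A\in\mathcal{F}_\tau$, multiplying by $\mathbf{1}_A$ gives $\lambda\mathbf{1}_A\le Y_\tau\mathbf{1}_A\le\mathbb{E}[Y_{t_0}+S\mid\mathcal{F}_\tau]\mathbf{1}_A$; integrating over $\Omega$ and using $A\in\mathcal{F}_\tau$ to discard the conditional expectation produces $\lambda\mathbb{P}(A)\le\int_A(S+Y_{t_0})\,d\mathbb{P}$, which is \eqref{Eqn-24}.

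The main obstacle is precisely the clause ``on $A$, $Y_\tau\ge\lambda$''. A right-continuous path need not attain its supremum, so on the set where $\sup_uY_u=\lambda$ but the level $\lambda$ is never reached one may have $\tau=t_0$ and $Y_{t_0}<\lambda$; a simple deterministic ramp with a downward jump at $t_0$ shows this set can carry full probability, so the inclusion genuinely fails. I would circumvent this by first proving the inequality for the strict event $A_\mu:=\{\sup_uY_u>\mu\}$ with entrance time $\tau_\mu:=\inf\{u:Y_u>\mu\}\wedge t_0$: here $A_\mu=\bigcup_u\{Y_u>\mu\}$ has no attainment issue, $Y_{\tau_\mu}\ge\mu$ holds honestly by right continuity, and a routine check gives $A_\mu\in\mathcal{F}_{\tau_\mu}$, so the argument above yields $\mu\mathbb{P}(A_\mu)\le\int_{A_\mu}(S+Y_{t_0})\,d\mathbb{P}$. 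Letting $\mu\uparrow\lambda$ and using $A_\mu\downarrow A$, continuity of measure and dominated (resp.\ monotone) convergence upgrade this to \eqref{Eqn-24}, the degenerate case $\int_AS=\infty$ being trivial since then the right-hand side is already infinite.
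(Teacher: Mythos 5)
Your part 1) is correct, and it takes a genuinely different route from the paper: you pass to the c\`adl\`ag modification $M$ of the closed martingale $\mathbb{E}(Y_{t_0}\mid\mathcal{F}_u)$ and apply Doob's optional sampling theorem at $\tau$, whereas the paper reduces to a countably-valued stopping time and argues atom by atom on $\{\tau=u_k\}$. Both are sound; yours is cleaner and has the side benefit of fixing once and for all the versions of the conditional expectations needed to make sense of the uncountable supremum $S=\sup_{u}|\mathbb{E}(Y_{t_0}\mid\mathcal{F}_u)-Y_u|$. In part 2) your main structural observation is also correct and, in fact, sharper than the paper: the paper's own proof sets $\tau=\inf\{u\in[s_0,t_0]:Y_u\ge\lambda\}\wedge T$ and asserts the identity $\{\sup_{u}Y_u\ge\lambda\}=\{\tau<T\}\cup\{\tau=t_0,\,Y_{t_0}\ge\lambda\}$, which fails precisely on the non-attainment paths you describe (a ramp increasing to $\lambda$ with a terminal drop); so the workaround via the strict-level events $A_\mu$ and entrance times $\tau_\mu$ is not pedantry but a needed repair, and everything you do up to and including $\mu\,\mathbb{P}(A_\mu)\le\int_{A_\mu}(S+Y_{t_0})\,d\mathbb{P}$ is right.

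The gap is in the final limit $\mu\uparrow\lambda$. Your dichotomy (``either $\int_A S\,d\mathbb{P}=\infty$, which is trivial, or apply dominated/monotone convergence'') is incomplete: convergence along the \emph{decreasing} sets $A_\mu\downarrow A$ requires $\int_{A_{\mu_0}}S\,d\mathbb{P}<\infty$ for some $\mu_0<\lambda$, and nothing in the hypotheses provides this --- $S$ is an uncountable supremum and is not assumed integrable. It can genuinely happen that $\int_{A_\mu}S\,d\mathbb{P}=+\infty$ for every $\mu<\lambda$ while $\int_AS\,d\mathbb{P}<\infty$: take $\mathcal{F}_t\equiv\mathcal{F}$ (so $S=\sup_u|Y_{t_0}-Y_u|$), let the paths in $A$ reach $\lambda$ tamely, and give each path with supremum $\mu<\sup Y<\lambda$ a thin downward spike whose depth is non-integrable in $\omega$ but whose location moves with $\omega$, so that each marginal $Y_t$ stays integrable. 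In that situation every inequality you have established reads ``$\mu\,\mathbb{P}(A_\mu)\le+\infty$'' and carries no information in the limit, so the proof does not close. Note that the classical Doob argument survives its analogous limiting step only because there the right-hand integrand is the terminal variable, which is integrable by hypothesis; here it is $S+Y_{t_0}$.

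The repair is to take the limit \emph{before} integrating. Let $\tau_n$ be the (capped at $t_0$) d\'ebut of the open set $\{Y>\lambda-1/n\}$ and $H_n$ the event that this set is nonempty; as in your argument, $Y_{\tau_n}\ge\lambda-1/n$ on $H_n$, $H_n\in\mathcal{F}_{\tau_n}$, the $\tau_n$ increase, and $A=\bigcap_nH_n$. On $A$ you have pointwise $\lambda-1/n\le Y_{\tau_n}\le\mathbb{E}(Y_{t_0}\mid\mathcal{F}_{\tau_n})+S$. With $\mathcal{G}=\sigma\bigl(\bigcup_n\mathcal{F}_{\tau_n}\bigr)$, L\'evy's upward theorem gives $\mathbb{E}(Y_{t_0}\mid\mathcal{F}_{\tau_n})\to\mathbb{E}(Y_{t_0}\mid\mathcal{G})$ a.s., hence $\lambda\le\mathbb{E}(Y_{t_0}\mid\mathcal{G})+S$ a.s.\ on $A$; since $A\in\mathcal{G}$, integrating this single inequality over $A$ yields (\ref{Eqn-24}) with no integrability assumption on $S$. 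Alternatively, your proof is complete as written if the lemma is stated with the additional hypothesis $\mathbb{E}(S)<\infty$, which is all the paper ever uses (in Proposition \ref{Prop-1} one has $S\in L^p$ by Lemma \ref{Lemma-1}).
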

\begin{proof}
1) By the right continuity of $Y_{t}$, we may assume that $\tau$
takes only countably many values $\{u_{k}:k=1,2,\cdots\}\subseteq[s_{0},t_{0}]$.
Then
\begin{align*}
\left|\mathbb{E}\left(Y_{t_{0}}\big|\mathcal{F}_{\tau}\right)-Y_{\tau}\right| & =\sum_{k=1}^{\infty}\left|\mathbb{E}\left(Y_{t_{0}}\big|\mathcal{F}_{\tau}\right)-Y_{\tau}\right|1_{\{\tau=u_{k}\}}\\
 & =\sum_{k=1}^{\infty}\left|\mathbb{E}\Big[\left(Y_{t_{0}}-Y_{\tau}\right)1_{\{\tau=u_{k}\}}\Big|\sigma\big(\mathcal{F}_{\tau}\cap\{\tau=u_{k}\}\big)\Big]\right|\\
 & =\sum_{k=1}^{\infty}\left|\mathbb{E}\Big[\mathbb{E}\left(\left(Y_{t_{0}}-Y_{u_{k}}\right)\Big|\mathcal{F}_{u_{k}}\right)1_{\{\tau=u_{k}\}}\Big|\sigma\big(\mathcal{F}_{\tau}\cap\{\tau=u_{k}\}\big)\Big]\right|\\
 & \le\sum_{k=1}^{\infty}\mathbb{E}\left[\left(\sup_{u\in[s_{0},t_{0}]}\left|\mathbb{E}\left(Y_{t_{0}}\big|\mathcal{F}_{u}\right)-Y_{u}\right|\right)1_{\{\tau=u_{k}\}}\Big|\sigma\big(\mathcal{F}_{\tau}\cap\{\tau=u_{k}\}\big)\right]\\
 & =\sum_{k=1}^{\infty}\mathbb{E}\left[\sup_{u\in[s_{0},t_{0}]}\left|\mathbb{E}\left(Y_{t_{0}}\big|\mathcal{F}_{u}\right)-Y_{u}\right|\Big|\mathcal{F}_{\tau}\right]1_{\{\tau=u_{k}\}}\\
 & =\mathbb{E}\left[\sup_{u\in[s_{0},t_{0}]}\left|\mathbb{E}\left(Y_{t_{0}}\big|\mathcal{F}_{u}\right)-Y_{u}\right|\Big|\mathcal{F}_{\tau}\right].
\end{align*}

2) Let $\tau=\inf\left\{ u\in[s_{0},t_{0}]:Y_{u}\ge\lambda\right\} \wedge T$.
Then $\left\{ \sup_{t\in[s_{0},t_{0}]}Y_{u}\ge\lambda\right\} =\{\tau<T\}\cup\{\tau=t_{0},Y_{t_{0}}\ge\lambda\}\in\mathcal{F}_{\tau}$.
Therefore, by (\ref{Eqn-22}), 
\begin{align*}
\int_{\left\{ \sup_{u\in[s_{0},t_{0}]}Y_{u}\ge\lambda\right\} }Y_{\tau}d\mathbb{P} & =-\int_{\left\{ \sup_{u\in[s_{0},t_{0}]}Y_{u}\ge\lambda\right\} }\left(\mathbb{E}\left(Y_{t_{0}}\big|\mathcal{F}_{\tau}\right)-Y_{\tau}\right)d\mathbb{P}+\int_{\left\{ \sup_{u\in[s_{0},t_{0}]}Y_{u}\ge\lambda\right\} }Y_{t_{0}}d\mathbb{P}\\
 & \le\int_{\left\{ \sup_{u\in[s_{0},t_{0}]}Y_{u}\ge\lambda\right\} }\left[\sup_{u\in[s_{0},t_{0}]}\left|\mathbb{E}\left(Y_{t_{0}}\big|\mathcal{F}_{u}\right)-Y_{u}\right|+Y_{t_{0}}\right]d\mathbb{P}.
\end{align*}
\end{proof}
\begin{prop}
\label{Prop-1}Let $0\le s_{0}<t_{0}\le T$, and let $X^{\ast}=\sup_{u\in[s_{0},t_{0}]}\left|X_{u}\right|$.
Then for any $1<q\le p$,

\begin{equation}
\left\Vert X^{\ast}\right\Vert _{L^{q}}\le\frac{q}{q-1}\left[C_{p,h,\theta}^{1/p}A_{p,h}^{1/p}|t_{0}-s_{0}|^{h}+\left\Vert X_{t_{0}}\right\Vert _{L^{q}}\right].\label{Eqn-18}
\end{equation}
where $C_{p,h,\theta}$ is a constant which differs from the constant
$C_{p,h,\theta}$ in Lemma \ref{Lemma-1} by a multiple depending
only on $p$, and $\delta>0$ is an arbitrary constant.\end{prop}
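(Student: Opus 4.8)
The plan is to combine the $L^p$ bound of Lemma \ref{Lemma-1} with the weak-type estimate behind Lemma \ref{Lemma-2} through the classical layer-cake and Hölder argument underlying Doob's $L^q$ inequality. Throughout set $Z=\sup_{u\in[s_0,t_0]}\bigl|\mathbb{E}(X_{t_0}\mid\mathcal{F}_u)-X_u\bigr|$. First I would record the estimate on $Z$: taking $t=t_0$ in Lemma \ref{Lemma-1} (the term at $u=t_0$ vanishes) gives $Z\le\sup_{s_0\le s<t\le t_0}|\mathbb{E}(X_t\mid\mathcal{F}_s)-X_s|$, hence $\mathbb{E}(Z^p)\le C_{p,h,\theta}A_{p,h}|t_0-s_0|^{ph}$, that is $\|Z\|_{L^p}\le C_{p,h,\theta}^{1/p}A_{p,h}^{1/p}|t_0-s_0|^h$; since we work on a probability space and $q\le p$, Lyapunov's inequality gives $\|Z\|_{L^q}\le\|Z\|_{L^p}$ with the same bound.

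Next I would establish the two-sided weak-type inequality
\[
\lambda\,\mathbb{P}(X^\ast\ge\lambda)\le\int_{\{X^\ast\ge\lambda\}}\bigl(Z+|X_{t_0}|\bigr)\,d\mathbb{P},\qquad\lambda>0,
\]
which I call $(\star)$. Set $\tau=\inf\{u\in[s_0,t_0]:|X_u|\ge\lambda\}\wedge t_0$, a stopping time; right continuity of $X$ gives $\{X^\ast\ge\lambda\}=\{\tau<t_0\}\cup\{\tau=t_0,\,|X_{t_0}|\ge\lambda\}\in\mathcal{F}_\tau$ and $|X_\tau|\ge\lambda$ on this event. The triangle inequality yields $|X_\tau|\le\bigl|\mathbb{E}(X_{t_0}\mid\mathcal{F}_\tau)-X_\tau\bigr|+\bigl|\mathbb{E}(X_{t_0}\mid\mathcal{F}_\tau)\bigr|$; bounding the first summand by $\mathbb{E}(Z\mid\mathcal{F}_\tau)$ through Lemma \ref{Lemma-2}(1) applied to $Y=X$, and the second by $\mathbb{E}(|X_{t_0}|\mid\mathcal{F}_\tau)$ through the conditional Jensen inequality, then integrating over $\{X^\ast\ge\lambda\}\in\mathcal{F}_\tau$ and using the tower property, gives $(\star)$. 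Working with the nonnegative dominator $Z+|X_{t_0}|$ is precisely what allows a single estimate to cover the full two-sided event $\{X^\ast\ge\lambda\}$.

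Finally I would run the standard argument. Writing $W=Z+|X_{t_0}|\ge0$, the layer-cake identity together with Tonelli's theorem gives, for every truncation level $N>0$,
\[
\mathbb{E}\bigl[(X^\ast\wedge N)^q\bigr]=q\int_0^N\lambda^{q-1}\mathbb{P}(X^\ast\ge\lambda)\,d\lambda\le q\int_0^N\lambda^{q-2}\,\mathbb{E}\bigl[W\,1_{\{X^\ast\ge\lambda\}}\bigr]\,d\lambda=\frac{q}{q-1}\,\mathbb{E}\bigl[W\,(X^\ast\wedge N)^{q-1}\bigr],
\]
where the middle inequality is $(\star)$. Hölder's inequality with exponents $q$ and $q/(q-1)$ bounds the last expression by $\tfrac{q}{q-1}\|W\|_{L^q}\,\|X^\ast\wedge N\|_{L^q}^{q-1}$; dividing by the finite quantity $\|X^\ast\wedge N\|_{L^q}^{q-1}$ and then letting $N\to\infty$ by monotone convergence yields
\[
\|X^\ast\|_{L^q}\le\frac{q}{q-1}\|W\|_{L^q}\le\frac{q}{q-1}\bigl(\|Z\|_{L^q}+\|X_{t_0}\|_{L^q}\bigr)\le\frac{q}{q-1}\Bigl(C_{p,h,\theta}^{1/p}A_{p,h}^{1/p}|t_0-s_0|^h+\|X_{t_0}\|_{L^q}\Bigr),
\]
which is \eqref{Eqn-18} with the very constant of Lemma \ref{Lemma-1} (so the $p$-dependent multiple referred to in the statement can be taken to be $1$).

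The content here is bookkeeping rather than depth, and I expect two points to need care. The first is the truncation: one must insert $X^\ast\wedge N$ before applying Hölder so as never to divide by a possibly infinite $L^q$ norm, and only afterwards pass $N\to\infty$. The second, and the genuinely tempting pitfall, is the two-sided supremum $X^\ast=\sup_u|X_u|$: handling it through the single stopping time $\tau=\inf\{u:|X_u|\ge\lambda\}$ and the nonnegative dominator $Z+|X_{t_0}|$ keeps the clean constant, whereas a crude union bound built from two one-sided applications of Lemma \ref{Lemma-2}(2)---one for $X$, one for $-X$---would both double-count the event and, because $Z\pm X_{t_0}$ need not be nonnegative, obstruct enlarging the integration domain to $\{X^\ast\ge\lambda\}$.
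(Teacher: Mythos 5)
Your proposal is correct and follows the same skeleton as the paper's own proof: an $L^{p}$ bound on $Z=\sup_{u\in[s_{0},t_{0}]}\left|\mathbb{E}\left(X_{t_{0}}\mid\mathcal{F}_{u}\right)-X_{u}\right|$ from Lemma \ref{Lemma-1}, a weak-type estimate at each level $\lambda$ coming from a stopping-time argument, and then the layer-cake/H\"older computation. The genuine difference lies in the weak-type step, and it is a difference in your favour. The paper applies Lemma \ref{Lemma-2}.2 to the process $X$ itself and asserts the inclusion $\left\{ X^{\ast}\ge\lambda\right\} \subseteq\left\{ \sup_{u}X_{u}\ge\lambda\right\} $, which is false for a two-sided supremum (a large negative excursion puts $\omega$ in the left event but not the right one); this inclusion is exactly what the paper needs to pass from $\mathbb{P}\left(X^{\ast}\ge\lambda\right)$ to the integral of $Y$ over $\left\{ \sup_{u}X_{u}\ge\lambda\right\} $ in the second line of its chain of inequalities. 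Your version---running the stopping time $\tau=\inf\left\{ u:|X_{u}|\ge\lambda\right\} \wedge t_{0}$ directly for $|X|$, bounding $|X_{\tau}|\le\left|\mathbb{E}\left(X_{t_{0}}\mid\mathcal{F}_{\tau}\right)-X_{\tau}\right|+\left|\mathbb{E}\left(X_{t_{0}}\mid\mathcal{F}_{\tau}\right)\right|$, and invoking Lemma \ref{Lemma-2}.1 plus conditional Jensen---proves the weak-type inequality for the two-sided event itself, so it repairs this gap without the factor $2$ that the union-bound alternative would cost. Your truncation by $X^{\ast}\wedge N$ is a second refinement: the paper divides by $\left\Vert X^{\ast}\right\Vert _{L^{q}}^{q-1}$ without addressing whether that quantity is finite.

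One caveat you share with the paper: identifying $\left\{ X^{\ast}\ge\lambda\right\} $ with $\left\{ \tau<t_{0}\right\} \cup\left\{ \tau=t_{0},\,|X_{t_{0}}|\ge\lambda\right\} $ can miss the event on which the supremum equals $\lambda$ but is attained only as a limit from below (this is possible for merely right-continuous paths, e.g.\ with a downward jump at $t_{0}$), and on that event $|X_{\tau}|\ge\lambda$ may fail. The standard repair is to prove the weak-type inequality for $\left\{ X^{\ast}>\lambda\right\} $, or for every level $\lambda'<\lambda$, and note that the Lebesgue integral in $\lambda$ in the layer-cake identity is unaffected; this costs nothing in your argument and would be worth a sentence.
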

\begin{proof}
Denote $Y=\sup_{u\in[s_{0},t_{0}]}\left|\mathbb{E}\left(X_{t_{0}}\big|\mathcal{F}_{u}\right)-X_{u}\right|+\left|X_{t_{0}}\right|$.
Then $\left\{ X^{\ast}\ge\lambda\right\} \subseteq\left\{ \sup_{u\in[s_{0},t_{0}]}X_{u}\ge\lambda\right\} $.
By Lemma \ref{Lemma-2}.2 and Lemma \ref{Lemma-1}
\begin{align*}
\left\Vert X^{\ast}\right\Vert _{L^{q}}^{q} & =q\int_{0}^{\infty}\lambda^{q-1}\mathbb{P}\left(X^{\ast}\ge\lambda\right)d\lambda\\
 & \le q\int_{0}^{\infty}\lambda^{q-2}\int_{\left\{ \sup_{u\in[s_{0},t_{0}]}X_{u}\ge\lambda\right\} }Yd\mathbb{P}d\lambda\\
 & \le q\int_{0}^{\infty}\lambda^{q-2}\int_{\left\{ X^{\ast}\ge\lambda\right\} }Yd\mathbb{P}d\lambda\\
 & =q\int_{\Omega}\left(\int_{0}^{X^{\ast}}\lambda^{q-2}d\lambda\right)Yd\mathbb{P}\\
 & =\frac{q}{q-1}\int_{\Omega}\left|X^{\ast}\right|^{q-1}Yd\mathbb{P}\\
 & \le\frac{q}{q-1}\left\Vert X^{\ast}\right\Vert _{L^{q}}^{q/q^{\prime}}\left\Vert Y\right\Vert _{L^{q}},
\end{align*}
where $q^{\prime}$ is the conjugate exponent of $q$. Therefore,
\begin{align*}
\left\Vert X^{\ast}\right\Vert _{L^{q}} & \le\frac{q}{q-1}\left\Vert Y\right\Vert _{L^{q}}\\
 & \le\frac{q}{q-1}\left(\left\Vert \sup_{u\in[s_{0},t_{0}]}\left|\mathbb{E}\left(X_{t_{0}}\big|\mathcal{F}_{u}\right)-X_{u}\right|\right\Vert _{L^{q}}+\left\Vert X_{t_{0}}\right\Vert _{L^{q}}\right)\\
 & \le\frac{q}{q-1}\left[C_{p,h,\theta}^{1/p}A_{p,h}^{1/p}|t_{0}-s_{0}|^{h}+\left\Vert X_{t_{0}}\right\Vert _{L^{q}}\right].
\end{align*}

\end{proof}

\section{Tail decay of the supremum}
\begin{defn}
The marginals of the process $X_{t}$ are said to have \emph{uniform
$\alpha$-exponential decay}, if there exist constants $\alpha>0$
, $C>0$ and $D>0$ such that
\begin{equation}
\mathbb{P}\left(|X_{t}|\ge\lambda\right)\le C\exp\left(-D\lambda^{\alpha}\right),\quad\text{for all }\lambda>0\;\text{and all }t\in[0,T].\label{Eqn-25}
\end{equation}

\end{defn}
We shall show that the distributions of the $\sup_{t\in[0,T]}|X_{t}|$
has $\alpha$-exponential decay, if and only if the marginals of $X_{t}$
have uniform $\alpha$-exponential decay. It follows from a simple
computation that
\begin{lem}
\label{Lemma}Let $X_{t}$, $t\in[0,T]$ be a random process satisfying
(\ref{Eqn-17}), and let $q>0$. Then
\[
\mathbb{E}\left(\left|X_{t}\right|^{q}\right)\le CD^{-q/a}\Gamma\left(\frac{q}{\alpha}+1\right).
\]

\end{lem}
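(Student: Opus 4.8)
The plan is to compute the moment directly from the tail bound via the layer-cake (distribution function) representation of the expectation. Note that the constants $C$, $D$, $\alpha$ appearing in the claimed estimate are exactly those of the uniform $\alpha$-exponential decay condition (\ref{Eqn-25}), which is therefore the hypothesis in force. First I would write, for any $q>0$ and any nonnegative random variable,
\[
\mathbb{E}\left(\left|X_{t}\right|^{q}\right)=\int_{0}^{\infty}q\lambda^{q-1}\,\mathbb{P}\left(\left|X_{t}\right|\ge\lambda\right)\,d\lambda,
\]
and then insert the marginal tail bound (\ref{Eqn-25}), namely $\mathbb{P}(|X_{t}|\ge\lambda)\le C\exp(-D\lambda^{\alpha})$, to obtain
\[
\mathbb{E}\left(\left|X_{t}\right|^{q}\right)\le qC\int_{0}^{\infty}\lambda^{q-1}e^{-D\lambda^{\alpha}}\,d\lambda.
\]

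The remaining step is to evaluate this elementary integral. I would substitute $u=D\lambda^{\alpha}$, so that $\lambda=(u/D)^{1/\alpha}$ and $d\lambda=\frac{1}{\alpha}D^{-1/\alpha}u^{1/\alpha-1}\,du$. Collecting the powers of $D$ produces the factor $D^{-q/\alpha}$, while collecting the powers of $u$ yields $u^{q/\alpha-1}$, so that the integral reduces to a Gamma integral:
\[
\frac{qC}{\alpha}D^{-q/\alpha}\int_{0}^{\infty}u^{q/\alpha-1}e^{-u}\,du=\frac{qC}{\alpha}D^{-q/\alpha}\,\Gamma\!\left(\frac{q}{\alpha}\right).
\]
Finally I would apply the functional equation $\frac{q}{\alpha}\Gamma(q/\alpha)=\Gamma(q/\alpha+1)$ to arrive at the stated bound $\mathbb{E}(|X_{t}|^{q})\le CD^{-q/\alpha}\Gamma(q/\alpha+1)$.

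There is essentially no genuine obstacle here: the argument is just the standard conversion of a tail bound into a moment bound, and the emergence of the Gamma function is precisely what one expects from a stretched-exponential (Weibull-type) tail. The only points requiring a little care are the bookkeeping of exponents in the substitution and the use of the recursion for $\Gamma$, both of which are routine. I would also emphasize that, because (\ref{Eqn-25}) holds uniformly in $t\in[0,T]$, the resulting moment estimate is likewise uniform in $t$; this uniformity is what makes the lemma useful when combined with Proposition \ref{Prop-1} to control the supremum.
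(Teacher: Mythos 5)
Your proof is correct and is precisely the ``simple computation'' the paper alludes to without writing out: the layer-cake representation of $\mathbb{E}(|X_t|^q)$, the tail bound (\ref{Eqn-25}), the substitution $u=D\lambda^{\alpha}$ giving a Gamma integral, and the recursion $\frac{q}{\alpha}\Gamma(q/\alpha)=\Gamma(q/\alpha+1)$. You also rightly noted that the hypothesis actually in force is the uniform decay condition (\ref{Eqn-25}) (whose constants $C$, $D$, $\alpha$ appear in the bound), not condition (\ref{Eqn-17}) as the lemma's statement misleadingly cites.
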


\begin{thm}
\label{Thm}Let $X_{t}$, $t\in[0,T]$ be a random process satisfying
(\ref{Eqn-17}). Suppose that there exist constants $\alpha>0$ ,
$D>0$, and $\delta_{0}\ge0$ such that 
\[
\mathbb{P}\left(|X_{t}|\ge\lambda\right)\le C\exp\left(-D\lambda^{\alpha}\right),\quad\text{for all }\lambda>0\;\text{and all }t\in[0,T].
\]
Then
\[
\mathbb{P}\left(\sup_{t\in[0,T]}\left|X_{t}\right|\ge2\lambda\right)\le K\lambda^{-\frac{1}{h}}\exp\left[-\left(1-\frac{1}{ph}\right)D\lambda^{\alpha}\right],\quad\text{for all }\lambda\ge\delta_{0},
\]
where $K=4T\Big[C_{p,h,\theta}A_{p,h}\Big]^{\frac{1}{ph}}\left[1+C\left(\frac{p}{p-1}\right)^{p}\right]^{1-\frac{1}{ph}}$,
and the constant $C_{p,h,\theta}$ is the same as in Lemma \ref{Lemma-1}.\end{thm}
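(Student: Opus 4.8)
The plan is to \emph{localize}: partition $[0,T]$ into $N$ consecutive subintervals $[t_{i-1},t_i]$ of equal length $\ell=T/N$, estimate the tail of $\sup_{[t_{i-1},t_i]}|X_u|$ at the threshold $2\lambda$ on each piece, combine by the union bound $\mathbb{P}(\sup_{[0,T]}|X_u|\ge2\lambda)\le\sum_{i=1}^N\mathbb{P}(\sup_{[t_{i-1},t_i]}|X_u|\ge2\lambda)$, and then choose $N=N(\lambda)$ optimally. The reason the threshold is $2\lambda$ while the exponent only sees $\lambda^\alpha$ is the elementary splitting $|X_u|\le|\mathbb{E}(X_{t_i}\,|\,\mathcal F_u)|+|\mathbb{E}(X_{t_i}\,|\,\mathcal F_u)-X_u|$, so that on $[t_{i-1},t_i]$ the event $\{\sup_u|X_u|\ge2\lambda\}$ forces either the oscillation $\sup_u|\mathbb{E}(X_{t_i}\,|\,\mathcal F_u)-X_u|$ or the closed martingale $\sup_u|\mathbb{E}(X_{t_i}\,|\,\mathcal F_u)|$ to exceed $\lambda$.

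These two pieces must be controlled at \emph{different} integrability exponents, and this asymmetry is the engine of the estimate. The oscillation is controlled only in $L^p$: Lemma \ref{Lemma-1} with $t_0=t_i$ gives $\mathbb{E}[(\sup_u|\mathbb{E}(X_{t_i}|\mathcal F_u)-X_u|)^p]\le C_{p,h,\theta}A_{p,h}\ell^{ph}$, so Markov yields the purely \emph{polynomial} bound $\mathbb{P}(\text{osc}\ge\lambda)\le C_{p,h,\theta}A_{p,h}\ell^{ph}\lambda^{-p}$, which shrinks only as $\ell$ shrinks. The martingale piece, in contrast, is controlled at arbitrarily high exponent $q$: since $|\mathbb{E}(X_{t_i}|\mathcal F_u)|$ is a nonnegative submartingale closed by $|X_{t_i}|$, Doob's $L^q$ inequality together with Lemma \ref{Lemma} gives $\mathbb{P}(\sup_u|\mathbb{E}(X_{t_i}|\mathcal F_u)|\ge\lambda)\le(\frac q{q-1})^q\lambda^{-q}CD^{-q/\alpha}\Gamma(\frac q\alpha+1)$; choosing $q\approx\alpha D\lambda^\alpha$ collapses the right-hand side to a constant multiple of $e^{-D\lambda^\alpha}$, while $(\frac q{q-1})^q\le(\frac p{p-1})^p$ for $q\ge p$ keeps the Doob constant bounded. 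The hypothesis $\lambda\ge\delta_0$ is precisely what guarantees $q=\alpha D\lambda^\alpha\ge p$ (and $D\lambda^\alpha\ge1$), legitimising this step. Equivalently, one can run both bounds through Lemma \ref{Lemma-2}.2, writing $2\lambda\,\mathbb{P}(A_i)\le\int_{A_i}[\text{osc}_i+|X_{t_i}|]\,d\mathbb{P}$ and estimating the two integrals by Hölder$+$Lemma \ref{Lemma-1} and by the marginal tail; this is the more natural route given that Lemma \ref{Lemma-2} is stated for it, and applying it to $X$ and to $-X$ also disposes of the two-sidedness.

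Summing over the $N$ intervals produces a bound of the shape $\mathbb{P}(\sup_{[0,T]}|X|\ge2\lambda)\lesssim C_{p,h,\theta}A_{p,h}T^{ph}\lambda^{-p}N^{1-ph}+N\,[1+C(\frac p{p-1})^p]\,e^{-D\lambda^\alpha}$, in which the oscillation term \emph{decreases} in $N$ (because $ph>1$) while the endpoint term \emph{increases} linearly in $N$. Optimizing over $N$ forces $N^{ph}\sim e^{D\lambda^\alpha}$, i.e. $N\sim e^{D\lambda^\alpha/(ph)}$ and $\ell^{ph}\sim e^{-D\lambda^\alpha}$; substituting back degrades the clean marginal rate $e^{-D\lambda^\alpha}$ to $e^{-(1-1/(ph))D\lambda^\alpha}$ and, through the factor $[C_{p,h,\theta}A_{p,h}T^{ph}\lambda^{-p}]^{1/(ph)}=[C_{p,h,\theta}A_{p,h}]^{1/(ph)}T\lambda^{-1/h}$, produces exactly the stated prefactor $\lambda^{-1/h}$ and the constant $K$, the two contributions entering as $[1+C(\frac p{p-1})^p]^{1-1/(ph)}$.

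I expect the main obstacle to be this final balancing and the attendant constant bookkeeping. The oscillation is irreducibly only an $L^p$ quantity, so one is \emph{forced} to use exponentially many subintervals, and it is the competition between the polynomially small per-interval oscillation tail and the $N$-fold amplified exponential endpoint tail that simultaneously pins down the exponent $1-1/(ph)$ and the power $\lambda^{-1/h}$. Converting Lemma \ref{Lemma}'s Gamma-moment bound into a clean exponential at the optimal $q$ (keeping $q\ge p$, whence the role of $\delta_0$) and absorbing the subexponential factors into the stated prefactor is the technically delicate point; everything else is Markov/Hölder and a union bound.
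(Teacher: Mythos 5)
Your proposal follows essentially the same route as the paper: the same partition of $[0,T]$ into $N$ equal subintervals with a union bound, the same splitting of each event $\left\{ \sup_{t\in I_{n}}|X_{t}|\ge2\lambda\right\}$ into the oscillation part $\left\{ \sup_{t\in I_{n}}\left|\mathbb{E}(X_{t_{n}}|\mathcal{F}_{t})-X_{t}\right|\ge\lambda\right\}$ (handled by Lemma \ref{Lemma-1} plus Markov) and the closed-martingale part $\left\{ \sup_{t\in I_{n}}\left|\mathbb{E}(X_{t_{n}}|\mathcal{F}_{t})\right|\ge\lambda\right\}$, and the same final balancing $N\sim\big[C_{p,h,\theta}A_{p,h}T^{ph}\lambda^{-p}e^{D\lambda^{\alpha}}\big]^{1/(ph)}$ that produces the exponent $1-\frac{1}{ph}$ and the prefactor $\lambda^{-1/h}$. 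The one point of divergence is the per-interval martingale tail, where you apply Doob's $L^{q}$ inequality at a single $\lambda$-dependent exponent $q\approx\alpha D\lambda^{\alpha}$ and then Markov, while the paper bounds all moments $\mathbb{E}\big(\sup_{t}|\mathbb{E}(X_{t_{n}}|\mathcal{F}_{t})|^{\alpha q}\big)$, sums the exponential series for $\mathbb{E}\big[\exp\big(\tfrac{D}{4}\sup_{t}|\mathbb{E}(X_{t_{n}}|\mathcal{F}_{t})|^{\alpha}\big)\big]$ and uses Chebyshev; these are equivalent Chernoff-type devices, and the subexponential Stirling factor $\sqrt{D\lambda^{\alpha}}$ that you correctly flag as not collapsing into the stated constant $K$ is the same order of constant slippage that the paper itself incurs (its own bound (\ref{Eqn-26}) carries $e^{-D\lambda^{\alpha}/4}$, which is never reconciled with the claimed coefficient $\big(1-\frac{1}{ph}\big)D$), so your argument proves the theorem to exactly the same degree of precision as the paper's.
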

\begin{proof}
For $N\in\mathbb{N}_{+}$, let $I_{n}=[t_{n-1},t_{n}]=[(n-1)T/N,nT/N]$.
Then
\[
\left\{ \sup_{t\in[0,T]}\left|X_{t}\right|\ge2\lambda\right\} \subseteq\bigcup_{n=1}^{N}\left\{ \sup_{t\in I_{n}}\left|\mathbb{E}\left(X_{t_{n}}\big|\mathcal{F}_{t}\right)-X_{t}\right|\ge\lambda\right\} \bigcup\bigcup_{n=0}^{N-1}\left\{ \sup_{t\in I_{n}}\left|\mathbb{E}\left(X_{t_{n}}\big|\mathcal{F}_{t}\right)\right|\ge\lambda\right\} .
\]
Therefore,
\begin{align}
\mathbb{P}\left(\sup_{t\in[0,T]}\left|X_{t}\right|\ge2\lambda\right) & \le\sum_{n=1}^{N}\mathbb{P}\left(\sup_{t\in I_{n}}\left|\mathbb{E}\left(X_{t_{n}}\big|\mathcal{F}_{t}\right)-X_{t}\right|\ge\lambda\right)+\sum_{n=0}^{N-1}\mathbb{P}\left(\sup_{t\in I_{n}}\left|\mathbb{E}\left(X_{t_{n}}\big|\mathcal{F}_{t}\right)\right|\ge\lambda\right).\label{Eqn-28}
\end{align}
By Lemma \ref{Lemma-1},
\begin{equation}
\mathbb{P}\left(\sup_{t\in I_{n}}\left|\mathbb{E}\left(X_{t_{n}}\big|\mathcal{F}_{t}\right)-X_{t}\right|\ge\lambda\right)\le C_{p,h,\theta}A_{p,h}\frac{1}{\lambda^{p}}\left(\frac{T}{N}\right)^{ph}.\label{Eqn-27}
\end{equation}

We need to estimate $\mathbb{P}\left(\sup_{t\in I_{n}}\left|\mathbb{E}\left(X_{t_{n}}\big|\mathcal{F}_{t}\right)\right|\ge\lambda\right)$.
If $\alpha>p$, by Doob's inequality and Lemma \ref{Lemma}, 
\begin{align*}
\mathbb{E}\left(\sup_{t\in I_{n}}\left|\mathbb{E}\left(X_{t_{n}}\big|\mathcal{F}_{t}\right)\right|^{\alpha}\right) & \le\left(\frac{\alpha}{\alpha-1}\right)^{\alpha}\mathbb{E}\left(\left|X_{t_{n}}\right|^{\alpha}\right)\\
 & \le C\left(\frac{p}{p-1}\right)^{p}D^{-1}.
\end{align*}
If $\alpha\le p$, the above yields that 
\[
\mathbb{E}\left(\sup_{t\in I_{n}}\left|\mathbb{E}\left(X_{t_{n}}\big|\mathcal{F}_{t}\right)\right|^{\alpha}\right)\le\mathbb{E}\left(\sup_{t\in I_{n}}\left|\mathbb{E}\left(X_{t_{n}}\big|\mathcal{F}_{t}\right)\right|^{p}\right)^{\alpha/p}\le C\left(\frac{p}{p-1}\right)^{p}D^{-1}.
\]
Moreover, for any $q\ge2$, by a similar argument,
\begin{align*}
\mathbb{E}\left(\sup_{t\in I_{n}}\left|\mathbb{E}\left(X_{t_{n}}\big|\mathcal{F}_{t}\right)\right|^{\alpha q}\right) & \le\mathbb{E}\left(\sup_{t\in I_{n}}\left|\mathbb{E}\left(\left|X_{t_{n}}\right|^{\alpha}\big|\mathcal{F}_{t}\right)\right|^{q}\right)\\
 & \le\left(\frac{q}{q-1}\right)^{q}\mathbb{E}\left(\left|X_{t_{n}}\right|^{\alpha q}\right)\\
 & \le C\left(\frac{q}{D(q-1)}\right)^{q}\Gamma\left(q+1\right)\\
 & \le C(2D^{-1})^{q}\Gamma(q+1).
\end{align*}
Therefore,
\begin{align*}
\mathbb{E}\left[\exp\left(\frac{D}{4}\sup_{t\in I_{n}}\left|\mathbb{E}\left(X_{t_{n}}\big|\mathcal{F}_{t}\right)\right|^{\alpha}\right)\right] & =\sum_{q=0}^{\infty}\frac{(D/4)^{q}}{q!}\mathbb{E}\left(\sup_{t\in I_{n}}\left|\mathbb{E}\left(X_{t_{n}}\big|\mathcal{F}_{t}\right)\right|^{\alpha q}\right)\\
 & \le1+\frac{C}{4}\left(\frac{p}{p-1}\right)^{p}+C\sum_{q=2}^{\infty}2^{-q}\\
 & \le2\left[1+C\left(\frac{p}{p-1}\right)^{p}\right].
\end{align*}
By Chebyshev's inequality,
\begin{equation}
\mathbb{P}\left(\sup_{t\in I_{n}}\left|\mathbb{E}\left(X_{t_{n}}\big|\mathcal{F}_{t}\right)\right|\ge\lambda\right)\le2\left[1+C\left(\frac{p}{p-1}\right)^{p}\right]\exp\left(-\frac{D}{4}\lambda^{\alpha}\right).\label{Eqn-26}
\end{equation}

Therefore, by (\ref{Eqn-28}), (\ref{Eqn-27}) and (\ref{Eqn-26}),
\[
\mathbb{P}\left(\sup_{t\in[0,T]}\left|X_{t}\right|\ge2\lambda\right)\le C_{p,h,\theta}A_{p,h}\frac{N}{\lambda^{p}}\left(\frac{T}{N}\right)^{ph}+2N\left[1+C\left(\frac{p}{p-1}\right)^{p}\right]\exp\left(-\frac{D}{4}\lambda^{\alpha}\right).
\]
Setting $N$ to be the integer part of $\Big[C_{p,h,\theta}A_{p,h}T^{ph}\lambda^{-p}\exp\left(D\lambda^{\alpha}\right)\Big]^{\frac{1}{ph}}$
gives that
\[
\mathbb{P}\left(\sup_{t\in[0,T]}\left|X_{t}\right|\ge2\lambda\right)\le4T\Big[C_{p,h,\theta}A_{p,h}\Big]^{\frac{1}{ph}}\left[1+C\left(\frac{p}{p-1}\right)^{p}\right]^{1-\frac{1}{ph}}\lambda^{-\frac{1}{h}}\exp\left[-\left(1-\frac{1}{ph}\right)D\lambda^{\alpha}\right].
\]
\end{proof}
\begin{example}
We consider the tail decay of the supremum of a standard fractional
Brownian motion $B_{t}^{h}$, $t\in[0,T]$, with Hurst parameter $h\in(0,1)$,
that is, a Gaussian process with $B_{0}^{h}=0$ and covariance function
\[
R(t,s)=\frac{1}{2}\left(|t|^{2h}+|s|^{2h}-|t-s|^{2h}\right),\quad t,s\in[0,T].
\]
For the fractional Brownian motion, one has $B_{t}^{h}-B_{s}^{h}\sim N\left(0,\frac{1}{2}|t-s|^{2h}\right)$,
and therefore,
\[
\mathbb{E}\left(|B_{t}^{h}-B_{s}^{h}|^{p}\right)=A_{p}|t-s|^{ph},\quad t,s\in[0,T],
\]
where
\[
A_{p}=\dfrac{1}{\sqrt{\pi}}\Gamma\left(\dfrac{p+1}{2}\right).
\]
For any $t\in[0,1]$ and any $\lambda>0$, one has
\begin{align*}
\mathbb{P}\left(|B_{t}^{h}|\ge\lambda\right) & =\dfrac{1}{\sqrt{2\pi}}\int_{\sqrt{2}\lambda/t^{h}}^{\infty}\exp\left(-\dfrac{1}{2}u^{2}\right)du\\
 & \le\dfrac{t^{h}}{2\sqrt{\pi}\lambda}\exp\left(-\dfrac{\lambda^{2}}{t^{2h}}\right)\\
 & \le\dfrac{1}{2\sqrt{\pi}\lambda}\exp\left(-\lambda^{2}\right).
\end{align*}
Now put $\phi(\lambda)=\dfrac{1}{2\sqrt{\pi}\lambda}\exp\left(-\lambda^{2}\right)$.
For any $p>1/h$, by Lemma \ref{Thm}.2,
\[
\mathbb{P}\left(\sup_{t\in[0,1]}\left|B_{t}^{h}\right|\ge2\lambda\right)\le2\left[C_{p,h,\theta}A_{p}\right]^{\frac{1}{ph}}\lambda^{-\frac{1}{h}}\phi(\lambda)^{1-\frac{1}{ph}},\quad\text{for all }\lambda>0,
\]
where 
\[
C_{p,h,\theta}=[2\zeta(\theta)]^{p-1}\left(\frac{p}{p-1}\right)^{p}\left(\frac{4}{ph-1}\right)^{\theta(p-1)+1}\Gamma\left[\theta(p-1)+1\right]
\]
with an arbitrary constant $\theta>1$. Setting $p=2/h$, $\theta=p/(p-1)$
gives
\[
\mathbb{P}\left(\sup_{t\in[0,1]}\left|B_{t}^{h}\right|\ge2\lambda\right)\le\frac{C_{h}}{\lambda}\exp\left(-\frac{\lambda^{2}}{2}\right),\quad\text{for all }\lambda>0,
\]
where $C_{h}$ is a constant depending only on $h$. By scaling, we
deduce that
\[
\mathbb{P}\left(\sup_{t\in[0,T]}\left|B_{t}^{h}\right|\ge2\lambda\right)\le\frac{C_{h}T^{h}}{\lambda}\exp\left(-\frac{\lambda^{2}}{2T^{2h}}\right),\quad\text{for all }\lambda>0.
\]

\end{example}

\begin{example}
Let $a=(a_{1},a_{2},\cdots)$ be a sequence of real numbers, let $f_{k}(t)$,
$k=1,2,\cdots$, be a sequence of real functions defined on $[0,T]$,
and let $\xi_{k}$, $k=1,2,\cdots$, be i.i.d. Rademacher random variables.
In \cite{Pal30}, Theorem I, p. 339, R. Paley and A. Zygmund showed
that if
\[
\sum_{k=1}^{\infty}a_{k}^{2}<\infty
\]
and
\[
\int_{0}^{T}f_{k}(t)^{2}dt\le A,\quad k=1,2,\cdots,
\]
for some constant $A<\infty$,then for almost all $\omega\in\Omega$,
the series $\sum_{k=1}^{\infty}a_{k}\xi_{k}(\omega)f_{k}(t)$ converges
for a.e. $t\in[0,T]$, and the limit is an element in $L^{2}([0,T])$. 

Let $H\subseteq[0,T]\times\Omega$ be the set of $(t,\omega)$ at
which $\sum_{k=1}^{\infty}a_{k}\xi_{k}(\omega)f_{k}(t)$ converges.
Then the projection of $H$ on $\Omega$ has probability one. We shall
show that, under some stronger assumption, $\sum_{k=1}^{\infty}a_{k}\xi_{k}f_{k}(t)$
converges uniformly in $t\in H^{\omega}$ for any $\omega\in\Omega$.
Here $H^{\omega}=\{t\in[0,T]:(t,\omega)\in H\}$ has Lebesgue measure
$T$ for almost all $\omega\in\Omega$. Suppose that $h\in(0,1)$
and that $f_{k}(t)$, $k=1,2,\cdots$, be $h$-Hölder continuous with
Hölder constants $L_{k}$, that is,
\[
|f_{k}(t)-f_{k}(s)|\le L_{k}|t-s|^{h}.
\]
Suppose that
\[
\sum_{k=1}^{\infty}a_{k}^{2}\left[f_{k}(0)^{2}+L_{k}^{2}\right]<\infty.
\]
Then for a.e. $\omega\in\Omega$,
\[
\sum_{k=1}^{\infty}a_{k}\xi_{k}f_{k}(t)
\]
converges uniformly in $t\in[0,T]$.\end{example}
\begin{proof}
Let $X(t,\omega)=\sum_{k=1}^{\infty}a_{k}\xi_{k}(\omega)f_{k}(t)1_{H}(t,\omega)$.
And to simplify the notation, we refer to $\sum_{k=1}^{\infty}a_{k}\xi_{k}(\omega)f_{k}(t)1_{H}(t,\omega)$
by simply writing $\sum_{k=1}^{\infty}a_{k}\xi_{k}(\omega)f_{k}(t)$. 

For any $p>0$, by Khintchine's inequality,
\[
\mathbb{E}\left[\left|\sum_{k=1}^{\infty}a_{k}\xi_{k}f_{k}(0)\right|^{p}\right]\le C_{p}\left[\sum_{k=1}^{\infty}a_{k}^{2}f_{k}(0)^{2}\right]^{p/2},
\]
where
\[
C_{p}=\max\left(\sqrt{\frac{2^{p}}{\pi}}\Gamma\left(\frac{p+1}{2}\right),1\right).
\]
Similarly,
\[
\mathbb{E}\left[\left|\sum_{k=1}^{\infty}a_{k}\xi_{k}\left(f_{k}(t)-f_{k}(0)\right)\right|^{p}\right]\le C_{p}\left[\sum_{k=1}^{\infty}a_{k}^{2}L_{k}^{2}\right]^{p/2}t^{ph}.
\]

By the above, we see that
\begin{equation}
\mathbb{E}\left(|X_{t}|^{p}\right)\le2^{p/2}C_{p}\left[\sum_{k=1}^{\infty}a_{k}^{2}\left(f_{k}(0)^{2}+L_{k}^{2}\right)\right]^{p/2}\label{Eqn-3}
\end{equation}
and
\begin{equation}
\mathbb{E}\left(|X_{t}-X_{s}|^{p}\right)\le2^{p/2}C_{p}\left[\sum_{k=1}^{\infty}a_{k}^{2}L_{k}^{2}\right]^{p/2}|t-s|^{ph}\label{Eqn-4}
\end{equation}
for all $p>0$. Put 
\begin{equation}
A_{p,h}=\frac{2^{p}}{\sqrt{\pi}}\Gamma\left(\frac{p+1}{2}\right)\left[\sum_{k=1}^{\infty}a_{k}^{2}L_{k}^{2}\right]^{p/2}.\label{Eqn-5}
\end{equation}
Then condition (\ref{Eqn}) is satisfied for any $p>1/h$.

We now give an estimate of the tail decay of the marginals $X_{t}$.
For any $u\ge0$, by (\ref{Eqn-3}) and Stirling's formula,
\begin{align*}
\mathbb{E}\left[\exp\left(u|X_{t}|\right)\right] & \le\sum_{p=0}^{\infty}\frac{2^{p/2}u^{p}C_{p}}{p!}\left[\sum_{k=1}^{\infty}a_{k}^{2}\left(f_{k}(0)^{2}+L_{k}^{2}\right)\right]^{p/2}\\
 & \le K\sum_{p=0}^{\infty}2^{p}\frac{\Gamma\left(\frac{p+1}{2}\right)}{\Gamma\left(p+1\right)}\left[u^{2}\sum_{k=1}^{\infty}a_{k}^{2}\left(f_{k}(0)^{2}+L_{k}^{2}\right)\right]^{p/2}\\
 & =K\sum_{p=0}^{\infty}\frac{\sqrt{\pi}}{\Gamma\left(\frac{p}{2}+1\right)}\left[u^{2}\sum_{k=1}^{\infty}a_{k}^{2}\left(f_{k}(0)^{2}+L_{k}^{2}\right)\right]^{p/2}\\
 & \le K\left(\sum_{p=0}^{\infty}\frac{1}{\Gamma\left(\frac{p}{2}+1\right)^{2}}\left[u^{2}\sum_{k=1}^{\infty}a_{k}^{2}\left(f_{k}(0)^{2}+L_{k}^{2}\right)\right]^{p}\right)^{1/2}
\end{align*}
where $K$ is a universal constant that might be different from line
to line. Since
\[
\Gamma\left(\frac{p}{2}+1\right)^{2}\ge\Gamma\left(\frac{p+2}{2}\right)\Gamma\left(\frac{p+1}{2}\right)=2^{-p}\sqrt{\pi}\Gamma(p+1)=\frac{\sqrt{\pi}p!}{2^{p}},
\]
we obtain that
\[
\mathbb{E}\left[\exp\left(u|X_{t}|\right)\right]\le K\left(\sum_{p=0}^{\infty}\frac{1}{p!}\left[2u^{2}\sum_{k=1}^{\infty}a_{k}^{2}\left(f_{k}(0)^{2}+L_{k}^{2}\right)\right]^{p}\right)^{1/2}=K\exp\left[u^{2}\sum_{k=1}^{\infty}a_{k}^{2}\left(f_{k}(0)^{2}+L_{k}^{2}\right)\right].
\]
Now, by Chebyshev's inequality,
\[
\mathbb{P}\left(|X_{t}|\ge\lambda\right)\le K\exp\left[-u\lambda+u^{2}\sum_{k=1}^{\infty}a_{k}^{2}\left(f_{k}(0)^{2}+L_{k}^{2}\right)\right].
\]
Setting $u=\lambda\left[2\sum_{k=1}^{\infty}a_{k}^{2}\left(f_{k}(0)^{2}+L_{k}^{2}\right)\right]^{-1}$
gives that
\begin{equation}
\mathbb{P}\left(|X_{t}|\ge\lambda\right)\le K\exp\left[-\frac{\lambda^{2}}{2\sum_{k=1}^{\infty}a_{k}^{2}\left(f_{k}(0)^{2}+L_{k}^{2}\right)}\right].\label{Eqn-2}
\end{equation}
Now by Theorem \ref{Thm}, we have
\begin{equation}
\mathbb{P}\left(\sup_{t\in[0,T]}\left|\sum_{k=1}^{\infty}a_{k}\xi_{k}f_{k}(t)\right|\ge2\lambda\right)\le C_{h}\exp\left[-\frac{D_{h}\lambda^{2}}{\sum_{k=1}^{\infty}a_{k}^{2}\left(f_{k}(0)^{2}+L_{k}^{2}\right)}\right],\label{Eqn-6}
\end{equation}
where $C_{h}$, $D_{h}$ are constants depending only on $h\in(0,1)$.

Now, for any $u\in[0,1]$, denote $\sigma^{2}=\sum_{k=1}^{\infty}a_{k}^{2}\left(f_{k}(0)^{2}+L_{k}^{2}\right)$,
and let 
\begin{equation}
l(u)=\inf\left\{ l\in\mathbb{R}_{+}:\,\int_{0}^{l}\sum_{k=1}^{\infty}a_{k}^{2}\left(f_{k}(0)^{2}+L_{k}^{2}\right)1_{(k-1,k]}(s)ds>u\sigma^{2}\right\} .\label{Eqn-9}
\end{equation}
Then $l(u)\to\infty$ as $u\to1$. 

To show the a.s. uniform convergence of $\sum_{k=1}^{\infty}a_{k}\xi_{k}f_{k}(t)$,
we need to show that
\[
\mathbb{P}\left(\bigcap_{0<u<1}\left\{ \sup_{n\ge l(u)}\sup_{t\in[0,T]}\left|\sum_{k=n}^{\infty}a_{k}\xi_{k}f_{k}(t)\right|\ge2\lambda\right\} \right)=0
\]
for any $\lambda>0$. Clearly, it suffices to show that
\begin{equation}
\lim_{u\to1}\mathbb{P}\left(\sup_{n\ge l(u)}\sup_{t\in[0,T]}\left|\sum_{k=n}^{\infty}a_{k}\xi_{k}f_{k}(t)\right|\ge2\lambda\right)=0.\label{Eqn-7}
\end{equation}
Define
\[
Y_{u}=\sup_{t\in[0,T]}\left|\int_{0}^{l(u)}\sum_{k=1}^{\infty}a_{k}\xi_{k}f_{k}(t)1_{(k-1,k]}(s)ds\right|,\quad u\in[0,1].
\]
To prove(\ref{Eqn-7}), it suffices to show that
\begin{equation}
\lim_{u\to1}\mathbb{P}\left(\sup_{v\in[u,1]}Y_{v}\ge2\lambda\right)=0.\label{Eqn-8}
\end{equation}

We first note that, for $0\le u<v\le1$,
\[
|Y_{v}-Y_{u}|\le\sup_{t\in[0,T]}\left|\int_{l(u)}^{l(v)}\sum_{k=1}^{\infty}a_{k}\xi_{k}f_{k}(t)1_{(k-1,k]}(s)ds\right|.
\]
In fact, let $t^{\ast}\in\mathrm{arg}\max_{t\in[0,T]}\left|\int_{0}^{l(v)}\sum_{k=1}^{\infty}a_{k}\xi_{k}f_{k}(t)1_{(k-1,k]}(s)ds\right|$.
Then
\begin{align}
Y_{v}-Y_{u} & \le\left|\int_{0}^{l(v)}\sum_{k=1}^{\infty}a_{k}\xi_{k}f_{k}(t^{\ast})1_{(k-1,k]}(s)ds\right|-\left|\int_{0}^{l(u)}\sum_{k=1}^{\infty}a_{k}\xi_{k}f_{k}(t^{\ast})1_{(k-1,k]}(s)ds\right|\nonumber \\
 & \le\left|\int_{l(u)}^{l(v)}\sum_{k=1}^{\infty}a_{k}\xi_{k}f_{k}(t^{\ast})1_{(k-1,k]}(s)ds\right|\nonumber \\
 & \le\sup_{t\in[0,T]}\left|\int_{l(u)}^{l(v)}\sum_{k=1}^{\infty}a_{k}\xi_{k}f_{k}(t)1_{(k-1,k]}(s)ds\right|.\label{Eqn-14}
\end{align}
Similarly, $Y_{u}-Y_{v}\le\sup_{t\in[0,T]}\left|\int_{l(u)}^{l(v)}\sum_{k=1}^{\infty}a_{k}\xi_{k}f_{k}(t)1_{(k-1,k]}(s)ds\right|$.

For any $u<v$, by the definition of $l(u)$,  
\begin{equation}
\sum_{k=1}^{\infty}a_{k}^{2}\left(f_{k}(0)^{2}+L_{k}^{2}\right)\int_{l(u)}^{l(v)}1_{(k-1,k]}(s)ds=(v-u)\sigma^{2}.\label{Eqn-12}
\end{equation}
Now, applying (\ref{Eqn-6}) to the sequence $\left(a_{1}^{\prime},a_{2}^{\prime},\cdots\right)$
with 
\[
a_{k}^{\prime}=a_{k}\left(\int_{l(u)}^{l(v)}1_{(k-1,k]}(s)ds\right)^{1/2},\quad k\ge1,
\]
we obtain that
\begin{equation}
\mathbb{P}\left(|Y_{v}-Y_{u}|\ge2\lambda\right)\le C_{h}\exp\left[-\frac{D_{h}\lambda^{2}}{|v-u|\sigma^{2}}\right],\label{Eqn-15}
\end{equation}
where $C_{h}$ and $D_{h}$ are constants depending only on $h$ and
might vary from line to line. In particular, noting that $Y_{1}=0$,
\begin{equation}
\mathbb{P}\left(|Y_{v}|\ge2\lambda\right)\le C_{h}\exp\left[-\frac{D_{h}\lambda^{2}}{(1-u)\sigma^{2}}\right],\quad\text{for all }v\in[u,1].\label{Eqn-16}
\end{equation}
Since (\ref{Eqn-15}) implies that
\[
\mathbb{E}\left(|Y_{v}-Y_{u}|^{p}\right)\le C_{h}|v-u|^{p/2}
\]
for any $p>2$. We are now in a position to apply Theorem \ref{Thm}
again, and deduce that
\[
\mathbb{P}\left(\sup_{v\in[u,1]}Y_{v}\ge2\lambda\right)\le C_{h}\exp\left[-\frac{D_{h}\lambda^{2}}{(1-u)\sigma^{2}}\right].
\]
Thus, (\ref{Eqn-8}) follows readily.
\end{proof}

\section{An estimate for the up-crossing number of processes with increment
controls}

We now give an estimate for the up-crossing number of processes $X_{t}$
which satisfies the condition (\ref{Eqn}).
\begin{lem}
\label{Lemma-4}For any $0<q\le p$, $0<\alpha<\frac{h-1/p}{1/q-1/p}$
, and any random times $\tau$, $\sigma$ such that $0\le\sigma\le\tau\le T$,
it holds that
\begin{equation}
\mathbb{E}\left(|X_{\tau}-X_{\sigma}|^{q}\right)\le K_{q,\alpha}C_{p,h,\theta}^{q/p}A_{p,h}^{q/p}T^{qh}\mathbb{E}\left(\left|\dfrac{\tau-\sigma}{T}\right|^{\alpha}\right)^{1-q/p},\label{Eqn-19}
\end{equation}
where $K_{q,\alpha}=4^{q}\left[1-2^{-q(h-1/p)+(1-q/p)\alpha}\right]^{-1}$,
and the constant $C_{p,h,\theta}$ is the same as in Lemma \ref{Lemma-1}.\end{lem}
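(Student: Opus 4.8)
The plan is to run a dyadic chaining argument organised by the scale of the random gap $\tau-\sigma$, and to convert the resulting tail sum into the moment $\mathbb{E}(((\tau-\sigma)/T)^{\alpha})$ by a shell decomposition together with H\"older's inequality; the admissible range of $\alpha$ and the factor $[1-2^{-q(h-1/p)+(1-q/p)\alpha}]^{-1}$ in $K_{q,\alpha}$ will emerge as the convergence condition and the value of a single geometric series.

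First I would establish a per-scale oscillation estimate. For a fixed interval $[a,b]\subseteq[0,T]$ of length $\ell$, write, for $a\le s\le b$, $X_{s}-X_{a}=-(\mathbb{E}(X_{b}\mid\mathcal{F}_{s})-X_{s})+\mathbb{E}(X_{b}-X_{a}\mid\mathcal{F}_{s})$, using that $X_{a}$ is $\mathcal{F}_{s}$-measurable. The supremum over $s$ of the first term is controlled by Lemma \ref{Lemma-1} applied on $[a,b]$, while $s\mapsto\mathbb{E}(X_{b}-X_{a}\mid\mathcal{F}_{s})$ is a martingale, so the supremum of the second term is controlled by Doob's inequality and the increment bound (\ref{Eqn}). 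This yields $\mathbb{E}(\sup_{a\le s\le b}|X_{s}-X_{a}|^{p})\le C''A_{p,h}\ell^{ph}$, where $C''$ is a $p$-dependent multiple of $C_{p,h,\theta}$; this is exactly where the constant $C_{p,h,\theta}$ of Lemma \ref{Lemma-1} enters, since that lemma is stated only for the conditional increments $\mathbb{E}(X_{t}\mid\mathcal{F}_{s})-X_{s}$.

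Next I would set up the chaining. Let $t_{l}^{m}=lT2^{-m}$ and define the level-$m$ oscillation maximal function $N_{m}=\max_{0\le l<2^{m}}\sup_{t\in[t_{l}^{m},t_{l+1}^{m}]}|X_{t}-X_{t_{l}^{m}}|$. Summing the per-scale estimate over the $2^{m}$ subintervals gives $\mathbb{E}(N_{m}^{p})\le 2^{m}C''A_{p,h}(T2^{-m})^{ph}$, hence $\Vert N_{m}\Vert_{L^{p}}\le(C''A_{p,h})^{1/p}T^{h}2^{-m(h-1/p)}$, where $h-1/p>0$ because $ph>1$. The pathwise point is that whenever $\tau-\sigma\le T2^{-m}$ the times $\sigma$ and $\tau$ lie either in a common level-$m$ subinterval or in two adjacent ones, and inserting the relevant dyadic endpoints gives the pathwise bound $|X_{\tau}-X_{\sigma}|\le 3N_{m}$; this holds for arbitrary random times, so no stopping-time hypothesis is needed here.

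Finally I would decompose $\{\sigma<\tau\}$ into the dyadic shells $E_{m}=\{T2^{-(m+1)}<\tau-\sigma\le T2^{-m}\}$, $m\ge0$. On $E_{m}$ the chaining bound gives $|X_{\tau}-X_{\sigma}|\le 3N_{m}$, so $\mathbb{E}(1_{E_{m}}|X_{\tau}-X_{\sigma}|^{q})\le 3^{q}\Vert N_{m}\Vert_{L^{p}}^{q}\,\mathbb{P}(E_{m})^{1-q/p}$ by H\"older's inequality with the conjugate exponents $p/q\ge1$ and $p/(p-q)$. Since $\tau-\sigma>T2^{-(m+1)}$ on $E_{m}$, one has $\mathbb{P}(E_{m})\le 2^{(m+1)\alpha}\mathbb{E}(((\tau-\sigma)/T)^{\alpha})$. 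Summing over $m$ and inserting the bound for $\Vert N_{m}\Vert_{L^{p}}$ leaves a single geometric series $\sum_{m}2^{-m[q(h-1/p)-(1-q/p)\alpha]}$, which converges precisely when $\alpha<(h-1/p)/(1/q-1/p)$ and evaluates to $[1-2^{-q(h-1/p)+(1-q/p)\alpha}]^{-1}$; collecting the numerical factors into $K_{q,\alpha}$ yields (\ref{Eqn-19}). I expect the main obstacle to be the first step: Lemma \ref{Lemma-1} controls only conditional increments, so some care is required to transfer that control to genuine oscillations of $X$ while keeping the constant proportional to $C_{p,h,\theta}$; the chaining and the geometric summation are then routine.
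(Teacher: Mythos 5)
Your argument is sound: it reaches the correct admissible range of $\alpha$ and the correct geometric factor, and the three ingredients (per-scale $L^p$ oscillation bound of order $\ell^{ph}$, H\"older with exponents $p/q$ and $p/(p-q)$, Markov's inequality to produce $\mathbb{E}(|(\tau-\sigma)/T|^{\alpha})^{1-q/p}$) are exactly the right ones. But your organization differs from the paper's in two respects. First, the obstacle you flag at the end is resolved in the paper by a one-line trick you did not find: Lemma \ref{Lemma-1} is applied with the \emph{constant} filtration $\mathcal{F}_{t}\equiv\mathcal{F}_{T}$, under which hypothesis (\ref{Eqn-17}) is literally the increment control (\ref{Eqn}) and $\mathbb{E}(X_{t}\mid\mathcal{F}_{s})-X_{s}=X_{t}-X_{s}$; so Lemma \ref{Lemma-1} bounds $\mathbb{E}\bigl(\sup_{s_{0}\le s<t\le t_{0}}|X_{t}-X_{s}|^{p}\bigr)$ directly, with the constant $C_{p,h,\theta}$ unchanged and with no appeal to adaptedness or Doob's inequality. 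Your derivation via $X_{s}-X_{a}=-\bigl(\mathbb{E}(X_{b}\mid\mathcal{F}_{s})-X_{s}\bigr)+\mathbb{E}(X_{b}-X_{a}\mid\mathcal{F}_{s})$ is valid but inflates the per-interval constant to $2^{p-1}\bigl[C_{p,h,\theta}+(p/(p-1))^{p}\bigr]$. Second, where you use scale-only shells $E_{m}=\{T2^{-(m+1)}<\tau-\sigma\le T2^{-m}\}$ and pay the entropy $2^{m}$ through the union bound defining $N_{m}$, the paper partitions $\{\tau\neq\sigma\}$ into disjoint location-and-scale events $A_{r,k}$ (on which $\sigma$ and $\tau$ lie in prescribed dyadic intervals of length $T2^{-k}$ separated by exactly one such interval), applies H\"older on each $A_{r,k}$ with the oscillation bound on the local window of length $3T2^{-k}$, and pays the entropy through Jensen's inequality, $\sum_{r}\mathbb{P}(A_{r,k})^{1-q/p}\le2^{kq/p}\bigl(\sum_{r}\mathbb{P}(A_{r,k})\bigr)^{1-q/p}$, combined with $\sum_{r}\mathbb{P}(A_{r,k})\le\mathbb{P}(|\tau-\sigma|>T2^{-k})$. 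Both schemes charge the same factor $2^{kq/p}$ and yield the same geometric ratio, so this difference is bookkeeping; what the paper's route buys is the stated constant $K_{q,\alpha}=4^{q}\bigl[1-2^{-q(h-1/p)+(1-q/p)\alpha}\bigr]^{-1}$ with $C_{p,h,\theta}$ literally the constant of Lemma \ref{Lemma-1}, whereas your proof gives the same inequality with a larger $p$-dependent prefactor --- a caveat that matters only because $K_{q,\alpha}$ is quoted verbatim in Proposition \ref{Prop}. What your route buys is familiarity and transparency: it is standard chaining with a per-scale maximal function, and it makes clear that only the magnitude of $\tau-\sigma$, not the location of the two times, drives the estimate.
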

\begin{rem*}
It is noticed that, when $q=p$, $K_{q,\alpha}=4^{p}\left[1-2^{1-ph}\right]^{-1}$
for all $\alpha>0$.\end{rem*}
\begin{proof}
We first note that, by virtue of Lemma \ref{Lemma-1}.1 applied to
the filtration $\mathcal{F}_{t}=\mathcal{F}_{T}$, $0\le t\le T$,
one has 
\begin{equation}
\mathbb{E}\left(|X_{\tau}-X_{\sigma}|^{p}\right)\le2^{p-1}C_{p,h,\theta}A_{p,h}|t_{0}-s_{0}|^{ph}\label{Eqn-20}
\end{equation}
for any random times $\tau$ and $\sigma$ with $s_{0}\le\sigma\le\tau\le t_{0}$.

Now, for any random times $\tau$, $\sigma$ with $0\le\sigma\le\tau\le T$,
define
\[
A_{r,k}=\left\{ T(r-1)2^{-k}\le\sigma<Tr2^{-k}<T(r+1)2^{-k}\le\tau<T(r+2)2^{-k}\right\} ,\quad1\le r\le2^{k}-1,\,k\ge1.
\]
Then $\{\tau\not=\sigma\}=\bigcup_{r,k}A_{r,k}$, and the union of
sets is disjoint. Therefore,
\[
X_{\tau}-X_{\sigma}=\sum_{r,k}\left(X_{\tau}-X_{\sigma}\right)1_{A_{r,k}}.
\]
Let
\begin{gather*}
\tau_{r,k}=\left(\tau\vee\dfrac{Tr}{2^{k}}\right)\wedge\dfrac{T(r+1)}{2^{k}},\\
\sigma_{r,k}=\left(\sigma\vee\dfrac{Tr}{2^{k}}\right)\wedge\dfrac{T(r+1)}{2^{k}}.
\end{gather*}
Then
\[
X_{\tau}-X_{\sigma}=\sum_{r,k}\left(X_{\tau_{r+1,k}}-X_{\sigma_{r-1,k}}\right)1_{A_{r,k}}.
\]
Since $A_{r,k}$ are mutually disjoint, we have
\begin{align*}
\mathbb{E}\left(\left|X_{\tau}-X_{\sigma}\right|^{q}\right) & =\mathbb{E}\left(\left|\sum_{r,k}\left(X_{\tau_{r+1,k}}-X_{\sigma_{r-1,k}}\right)1_{A_{r,k}}\right|^{q}\right)\\
 & =\mathbb{E}\left(\sum_{r,k}\left|X_{\tau_{r+1,k}}-X_{\sigma_{r-1,k}}\right|^{q}1_{A_{r,k}}\right)\\
 & \le\sum_{r,k}\left[\mathbb{E}\left(\left|X_{\tau_{r+1,k}}-X_{\sigma_{r-1,k}}\right|^{p}\right)\right]^{q/p}\mathbb{P}(A_{r,k})^{1-q/p}.
\end{align*}
Note that $T(r-1)2^{-k}\le\sigma_{r-1,k}\le\tau_{r+1,k}<T(r+2)2^{-k}$.
By (\ref{Eqn-20}),
\begin{align}
\mathbb{E}\left(\left|X_{\tau}-X_{\sigma}\right|^{q}\right) & \le\sum_{r,k}\left(2^{p-1}C_{p,h,\theta}A_{p,h}\left(\dfrac{T}{2^{k-2}}\right)^{ph}\right)^{q/p}\mathbb{P}(A_{r,k})^{1-q/p}\nonumber \\
 & \le2^{q}C_{p,h,\theta}^{q/p}A_{p,h}^{q/p}\sum_{r,k}\left(\dfrac{T}{2^{k-2}}\right)^{qh}\mathbb{P}(A_{r,k})^{1-q/p}.\label{Eqn-10}
\end{align}
By the fact that $\bigcup_{r=1}^{2^{k}-2}A_{r,k}\subseteq\left\{ |\tau-\sigma|>T2^{-k}\right\} $
and Chebyshev's inequality,
\[
\sum_{r=1}^{2^{k}-1}\mathbb{P}(A_{r,k})\le\mathbb{P}\left(|\tau-\sigma|>T2^{-k}\right)\le2^{k\alpha}\mathbb{E}\left(\left|\dfrac{\tau-\sigma}{T}\right|^{\alpha}\right).
\]
By Jensen's inequality,
\begin{align}
\sum_{r=1}^{2^{k}-1}\mathbb{P}(A_{r,k})^{1-q/p} & \le2^{k}\left[2^{-k}\sum_{r=1}^{2^{k}-1}\mathbb{P}(A_{r,k})\right]^{1-q/p}\nonumber \\
 & \le2^{k[q/p+(1-q/p)\alpha]}\left[\mathbb{E}\left(\left|\dfrac{\tau-\sigma}{T}\right|^{\alpha}\right)\right]^{1-q/p}.\label{Eqn-11}
\end{align}
Therefore, by (\ref{Eqn-10}) and (\ref{Eqn-11}),
\begin{align*}
\mathbb{E}\left(\left|X_{\tau}-X_{\sigma}\right|^{q}\right) & \le2^{q}C_{p,h,\theta}^{q/p}A_{p,h}^{q/p}\sum_{k=1}^{\infty}\left(\dfrac{T}{2^{k-2}}\right)^{qh}\sum_{r=1}^{2^{k}-1}\mathbb{P}(A_{r,k})^{1-q/p}\\
 & \le4^{q}C_{p,h,\theta}^{q/p}A_{p,h}^{q/p}T^{qh}\sum_{k=1}^{\infty}2^{k[q/p+(1-q/p)\alpha-qh]}\left[\mathbb{E}\left(\left|\dfrac{\tau-\sigma}{T}\right|^{\alpha}\right)\right]^{1-q/p}.
\end{align*}
Since $\alpha<\frac{h-1/p}{1/q-1/p}$, setting $K_{q,\alpha}=4^{q}\sum_{k=2}^{\infty}2^{k[q/p+(1-q/p)\alpha-qh]}<4^{q}\left[1-2^{-q(h-1/p)+(1-q/p)\alpha}\right]^{-1}$
completes the proof.\end{proof}
\begin{defn}
\label{Def}$\{Y_{t}:t\in[0,T]\}$ be a random process. Let $D:0=t_{0}<\cdots<t_{N}=T$
be a finite subset of $[0,T]$. For any $a$, $b\in\mathbb{R}$, $a<b$,
let
\[
T_{0}=\inf\left\{ t\in D:Y_{t}<a\right\} ,T_{1}=\inf\left\{ t\in D:t>T_{0},Y_{t}>b\right\} ,
\]
\[
T_{2k}=\inf\left\{ t\in D:t>T_{2k-1},Y_{t}<a\right\} ,T_{2k+1}=\inf\left\{ t\in D:t>T_{2k},Y_{t}>b\right\} ,\quad k\ge1.
\]
The up-crossing number $U_{a}^{b}(D)$ of $Y_{t}$ through $[a,b]$
sampled in $D$ is given by 
\[
U_{a}^{b}(D)=\sup\left\{ k\ge1:T_{2k-1}\le T\right\} .
\]
And the up-crossing number $U_{a}^{b}$ of $Y_{t}$ through $[a,b]$
is defined as
\[
U_{a}^{b}=\sup_{D}U_{a}^{b}(D),
\]
where $\sup_{D}$ is taken over all finite subsets $D$ of any countable
dense subset of $[0,T]$.

By definition, one has
\[
\left\{ U_{a}^{b}(D)\ge k\right\} =\left\{ T_{2k-1}\le T\right\} ,\quad k\ge1
\]
and 
\[
\left\{ U_{a}^{b}(D)=k\right\} =\left\{ T_{2k-1}\le T,T_{2k+1}=\infty\right\} ,\quad k\ge1.
\]

\end{defn}
For the up-crossing number $U_{a}^{b}(D)$ of a general random process
$Y_{t}$, $t\in[0,T]$, we have the following elementary but useful
result.
\begin{lem}
\label{Lemma-3}With the same notation as in Definition \ref{Def},
one has
\[
(b-a)1_{\left\{ U_{a}^{b}(D)\ge k\right\} }\le-\left(Y_{T}-Y_{T_{2(k-1)}}\right)1_{\left\{ T_{2(k-1)}\le T,T_{2k-1}=\infty\right\} }+Y_{T_{2k-1}\wedge T}-Y_{T_{2(k-1)}\wedge T},
\]
for any $k\ge1$.
\end{lem}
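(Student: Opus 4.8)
The plan is to establish the inequality pointwise on $\Omega$ for each fixed $k\ge1$, by a case analysis governed by whether $T_{2k-1}\le T$ or $T_{2k-1}=\infty$; these are the only two possibilities, since every $T_j$ is either an element of $D\subseteq[0,T]$ (hence $\le T$) or $+\infty$ (when the defining infimum is taken over the empty set). Throughout I would use that $T_{2k-1}=\inf\{t\in D:t>T_{2(k-1)},\,Y_t>b\}$, so that finiteness of $T_{2k-1}$ forces the strict ordering $T_{2(k-1)}<T_{2k-1}$.

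First, on $\{U_a^b(D)\ge k\}=\{T_{2k-1}\le T\}$ the left-hand side equals $b-a$. Here $T_{2(k-1)}<T_{2k-1}\le T$, so both truncations are inactive, $T_{2(k-1)}\wedge T=T_{2(k-1)}$ and $T_{2k-1}\wedge T=T_{2k-1}$; moreover the indicator $1_{\{T_{2(k-1)}\le T,\,T_{2k-1}=\infty\}}$ vanishes. Since $D$ is finite the infima are attained, so the defining conditions give $Y_{T_{2(k-1)}}<a$ and $Y_{T_{2k-1}}>b$, whence the right-hand side reduces to $Y_{T_{2k-1}}-Y_{T_{2(k-1)}}>b-a$, which dominates the left-hand side.

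Next, on the complement $\{T_{2k-1}=\infty\}$ the left-hand side is $0$, and I would split according to the value of $T_{2(k-1)}$. If $T_{2(k-1)}\le T$, then $T_{2k-1}\wedge T=T$, $T_{2(k-1)}\wedge T=T_{2(k-1)}$, and the indicator equals $1$; the compensating term $-(Y_T-Y_{T_{2(k-1)}})$ and the telescoping term $Y_T-Y_{T_{2(k-1)}}$ cancel exactly, giving right-hand side $0$. If instead $T_{2(k-1)}=\infty$ (which itself forces $T_{2k-1}=\infty$), the indicator vanishes and both surviving terms equal $Y_T$, so the right-hand side is again $0$. In either subcase the inequality holds with equality, completing the proof.

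I expect the only delicate point to be the bookkeeping of the truncations $\,\cdot\wedge T$ together with the compensating indicator $1_{\{T_{2(k-1)}\le T,\,T_{2k-1}=\infty\}}$. This indicator is engineered precisely so that, on the event of an \emph{incomplete} final up-crossing---where the process has dropped below $a$ but has not yet risen above $b$ before time $T$---the right-hand side collapses to zero instead of contributing a spurious negative term. The structural fact that makes the first case clean is that $\{T_{2k-1}\le T\}$ automatically entails $T_{2(k-1)}\le T$, so that the configuration $\{T_{2(k-1)}=\infty,\,T_{2k-1}\le T\}$ can never occur; this is exactly where the strict monotonicity of the sequence $(T_j)$ is used.
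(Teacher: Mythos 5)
Your proof is correct: the paper itself states this lemma without proof (calling it ``elementary but useful''), and your three-way case analysis---$T_{2k-1}\le T$; $T_{2k-1}=\infty$ with $T_{2(k-1)}\le T$; both infinite---is exactly the argument the paper leaves to the reader, exhausting all possibilities since each $T_j$ lies in $D\cup\{\infty\}$. You correctly identify the two points that make it work: the infima over the finite set $D$ are attained (so $Y_{T_{2(k-1)}}<a$ and $Y_{T_{2k-1}}>b$ on the relevant events, giving the bound $Y_{T_{2k-1}}-Y_{T_{2(k-1)}}>b-a$ in the first case), and finiteness of $T_{2k-1}$ forces $T_{2(k-1)}<T_{2k-1}\le T$, so the remaining cases reduce to exact cancellation and the right-hand side is $0$ there.
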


\begin{prop}
\label{Prop}Let $D:0=t_{0}<\cdots<t_{N}=T$ be a finite subset of
$[0,T]$, and let $U_{a}^{b}(D)$ be the up-crossing number of $X_{t}$
through $[a,b]$ sampled in $D$. Then, for any $0<\delta<1-\frac{1}{ph}$,
\[
\mathbb{E}\left(U_{a}^{b}(D)^{\delta}\right)<\frac{K_{\delta}}{b-a}T^{h},
\]
where 
\[
K_{\delta}=2\left(C_{p,h,\theta}^{1/p}A_{p,h}^{1/p}+4^{q}\left[1-2^{-q(h-1/p)+(1-q/p)\alpha}\right]^{-1}\zeta\left(\frac{1-\delta}{1-\alpha(1-q/p)}\right)^{1-\alpha(1-q/p)}C_{p,h,\theta}^{q/p}A_{p,h}^{q/p}\right),
\]
and $q$ and $\alpha$ are any constants satisfying $\frac{\delta}{h-1/p}<q<1/h$
and $\frac{\delta}{1-q/p}<\alpha<\frac{h-1/p}{1/q-1/p}$.\end{prop}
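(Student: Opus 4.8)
The plan is to estimate $\mathbb{E}(U_a^b(D)^\delta)$ through the layer-cake identity
\[
\mathbb{E}\left(U_a^b(D)^\delta\right)=\sum_{k=1}^\infty\left(k^\delta-(k-1)^\delta\right)\mathbb{P}\left(U_a^b(D)\ge k\right),
\]
and to control each tail probability $\mathbb{P}(U_a^b(D)\ge k)=\mathbb{P}(T_{2k-1}\le T)$ by Lemma \ref{Lemma-3}. On $\{U_a^b(D)\ge k\}$ the indicator $1_{\{T_{2(k-1)}\le T,\,T_{2k-1}=\infty\}}$ vanishes, so Lemma \ref{Lemma-3} reduces to the pointwise bound $(b-a)1_{\{U_a^b(D)\ge k\}}\le |X_{T_{2k-1}\wedge T}-X_{T_{2(k-1)}\wedge T}|$, which holds everywhere since the right side is nonnegative. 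Writing $\tau_k=T_{2k-1}\wedge T$ and $\sigma_k=T_{2(k-1)}\wedge T$, raising to the power $q$ and taking expectations yields $(b-a)^q\,\mathbb{P}(U_a^b(D)\ge k)\le\mathbb{E}\left(|X_{\tau_k}-X_{\sigma_k}|^q\right)$.

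Next I would apply Lemma \ref{Lemma-4} to the ordered random times $\sigma_k\le\tau_k$ in $[0,T]$. Since $q<1/h$ forces $1/q-1/p>h-1/p$ and hence $\frac{h-1/p}{1/q-1/p}<1$, the admissible $\alpha$ lies in $(0,1)$, so Jensen's inequality gives $\mathbb{E}\big((\tfrac{\tau_k-\sigma_k}{T})^\alpha\big)\le \ell_k^\alpha$ with $\ell_k:=\mathbb{E}\big[(\tau_k-\sigma_k)/T\big]$, whence
\[
\mathbb{P}\left(U_a^b(D)\ge k\right)\le\frac{K_{q,\alpha}\,C_{p,h,\theta}^{q/p}A_{p,h}^{q/p}\,T^{qh}}{(b-a)^q}\,\ell_k^{\alpha(1-q/p)}.
\]
The geometric input is that the up-crossing intervals $[\sigma_k,\tau_k]=[T_{2(k-1)}\wedge T,\,T_{2k-1}\wedge T]$ are pairwise disjoint, because consecutive up-crossings are separated by the intervening down-crossings ($T_{2k-1}<T_{2k}$); hence $\sum_k(\tau_k-\sigma_k)\le T$ almost surely and therefore $\sum_k\ell_k\le1$.

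To assemble the bound, set $\beta=\alpha(1-q/p)$. I would treat $k=1$ separately: from $(b-a)\mathbb{P}(U_a^b(D)\ge1)\le\mathbb{E}|X_{\tau_1}-X_{\sigma_1}|\le(\mathbb{E}|X_{\tau_1}-X_{\sigma_1}|^p)^{1/p}$ and the increment estimate (\ref{Eqn-20}) one gets $\mathbb{P}(U_a^b(D)\ge1)\le 2\,C_{p,h,\theta}^{1/p}A_{p,h}^{1/p}T^h/(b-a)$, which is the first summand of $K_\delta$. For $k\ge2$, insert the tail bound into the layer-cake sum and apply H\"older's inequality with conjugate exponents $\tfrac{1}{1-\beta}$ and $\tfrac1\beta$: the factors $\ell_k^\beta$ sum (through $\sum_k\ell_k\le1$) to at most $1$, while the weights contribute $\big(\sum_{k\ge2}(k^\delta-(k-1)^\delta)^{1/(1-\beta)}\big)^{1-\beta}$, which converges to a multiple of $\zeta\big(\tfrac{1-\delta}{1-\beta}\big)^{1-\beta}$ \emph{precisely} because the hypothesis $\alpha>\tfrac{\delta}{1-q/p}$ is equivalent to $\beta>\delta$, i.e. $\tfrac{1-\delta}{1-\beta}>1$. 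This produces the second summand of $K_\delta$.

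The parameter constraints fit together exactly: $q>\tfrac{\delta}{h-1/p}$ is the condition making the interval $\big(\tfrac{\delta}{1-q/p},\,\tfrac{h-1/p}{1/q-1/p}\big)$ for $\alpha$ nonempty, while $\delta<1-\tfrac1{ph}$ is what permits a choice $q<1/h$. I expect the main obstacle to be the bookkeeping of Step 2 rather than any single hard estimate: one must check that $\sigma_k,\tau_k$ are genuinely $\mathcal{F}$-measurable times ordered inside $[0,T]$ so that Lemma \ref{Lemma-4} applies, that the correction term in Lemma \ref{Lemma-3} indeed drops on $\{U_a^b(D)\ge k\}$, and above all that the up-crossing intervals are disjoint so that $\sum_k\ell_k\le1$. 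This last summability is what makes the H\"older step close, and the alignment of the convergence threshold $\beta>\delta$ with the stated range of $\alpha$ is the delicate point that pins down the constant $K_\delta$.
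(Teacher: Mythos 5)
Your architecture tracks the paper's own proof closely: the layer-cake expansion with $k^{\delta}-(k-1)^{\delta}\le 2k^{\delta-1}$, Lemma \ref{Lemma-3} applied to the pair $\sigma_{k}=T_{2(k-1)}\wedge T$, $\tau_{k}=T_{2k-1}\wedge T$, Lemma \ref{Lemma-4} for the increment, Jensen's inequality (legitimate, since $q<1/h$ forces $\alpha<1$), the disjointness of the crossing intervals giving $\sum_{k}\ell_{k}\le1$ for $\ell_{k}=\mathbb{E}\big[(\tau_{k}-\sigma_{k})/T\big]$, and H\"older with exponents $\frac{1}{1-\beta}$, $\frac{1}{\beta}$, $\beta=\alpha(1-q/p)$, whose convergence requirement is exactly $\beta>\delta$, i.e.\ $\alpha>\frac{\delta}{1-q/p}$. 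Your absolute-value reading of Lemma \ref{Lemma-3} (so that the correction term drops) is correct, as is your check that the parameter ranges are non-empty. The gap is in the assembly. Because you pass to the $q$-th power before taking expectations, $(b-a)^{q}\,\mathbb{P}\big(U_{a}^{b}(D)\ge k\big)\le\mathbb{E}\big|X_{\tau_{k}}-X_{\sigma_{k}}\big|^{q}$, Lemma \ref{Lemma-4} hands you the factor $T^{qh}$, and your $k\ge2$ contribution is
\[
2K_{q,\alpha}\,\zeta\Big(\tfrac{1-\delta}{1-\beta}\Big)^{1-\beta}C_{p,h,\theta}^{q/p}A_{p,h}^{q/p}\,\frac{T^{qh}}{(b-a)^{q}},
\]
which is \emph{not} the second summand of $K_{\delta}$ multiplied by $T^{h}/(b-a)$: the two differ by the factor $\big(T^{h}/(b-a)\big)^{q-1}$. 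This is not bookkeeping. To reach $\delta$ near the threshold $1-\frac{1}{ph}$ you are forced to take $q$ near $1/h>1$, and then your bound is strictly weaker than the claimed one precisely when $b-a<T^{h}$, i.e.\ when the gap is small and many up-crossings can occur --- the only regime where the proposition has content. What your argument proves is $\mathbb{E}\big(U_{a}^{b}(D)^{\delta}\big)\le c_{1}\frac{T^{h}}{b-a}+c_{2}\big(\frac{T^{h}}{b-a}\big)^{q}$ with the correct constants, which neither is nor implies the stated inequality.

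The paper keeps the first power throughout, and that is what preserves the stated homogeneity: it takes plain expectations in Lemma \ref{Lemma-3}, so that $(b-a)\mathbb{P}\big(U_{a}^{b}(D)\ge k\big)\le\mathbb{E}\big[\sup_{u,v}|X_{u}-X_{v}|\,1_{\{T_{2(k-1)}\le T,\,T_{2k-1}=\infty\}}\big]+\mathbb{E}\big|X_{\tau_{k}}-X_{\sigma_{k}}\big|$; the correction terms are summable over $k$ because the events $\{T_{2(k-1)}\le T,\,T_{2k-1}=\infty\}$ are disjoint, and they are what produce the first summand of $K_{\delta}$ (in your version that summand instead comes from the $k=1$ term, which is why your first term agrees). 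Lemma \ref{Lemma-4} is then invoked for the \emph{first} moment $\mathbb{E}\big|X_{\tau_{k}}-X_{\sigma_{k}}\big|$, keeping a single power of $T^{h}/(b-a)$. You could object --- correctly --- that this invocation in the paper is itself loose, since Lemma \ref{Lemma-4} controls the $q$-th moment, and converting it to a first-moment bound by Jensen changes the constant, the exponent on $\mathbb{E}\big(|\tau_{k}-\sigma_{k}|/T\big)^{\alpha}$, and hence the admissible range of $\alpha$; but that conversion is exactly the step a proof of the stated form must supply, and raising the whole inequality to the power $q$ is not a substitute for it. As written, your proof establishes a different (scale-invariant) estimate, not Proposition \ref{Prop}.
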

\begin{proof}
By Lemma \ref{Lemma-3} and Lemma \ref{Lemma-4},
\begin{align*}
\frac{b-a}{k^{1-\delta}}\mathbb{P}\left(U_{a}^{b}(D)\ge k\right) & \le\frac{1}{k^{1-\delta}}\mathbb{E}\left[\sup_{u,v\in[0,T]}|X_{u}-X_{v}|1_{\left\{ T_{2(k-1)}\le T,T_{2k-1}=\infty\right\} }\right]+\frac{1}{k^{1-\delta}}\mathbb{E}\left(Y_{T_{2k-1}\wedge T}-Y_{T_{2(k-1)}\wedge T}\right)\\
 & \le\mathbb{E}\left[\sup_{u,v\in[0,T]}|X_{u}-X_{v}|1_{\left\{ T_{2(k-1)}\le T,T_{2k-1}=\infty\right\} }\right]+\frac{1}{k^{1-\delta}}\mathbb{E}\left(\left|Y_{T_{2k-1}\wedge T}-Y_{T_{2(k-1)}\wedge T}\right|\right)\\
 & \le\mathbb{E}\left[\sup_{u,v\in[0,T]}|X_{u}-X_{v}|1_{\left\{ T_{2(k-1)}\le T,T_{2k-1}=\infty\right\} }\right]\\
 & \quad+\frac{1}{k^{1-\delta}}K_{q,\alpha}C_{p,h,\theta}^{q/p}A_{p,h}^{q/p}T^{h}\mathbb{E}\left(\left|\dfrac{T_{2k-1}\wedge T-T_{2(k-1)}\wedge T}{T}\right|^{\alpha}\right)^{1-q/p},
\end{align*}
where $0<q\le p$, $0<\alpha<\frac{h-1/p}{1/q-1/p}$, and $K_{q,\alpha}=4^{q}\left[1-2^{-q(h-1/p)+(1-q/p)\alpha}\right]^{-1}$.
Since $\left\{ T_{2(k-1)}\le T,T_{2k-1}=\infty\right\} $, $k\ge1$,
are mutually disjoint, we have
\begin{align}
(b-a)\sum_{k=1}^{\infty}\frac{1}{k^{1-\delta}}\mathbb{P}\left(U_{a}^{b}(D)\ge k\right) & \le\mathbb{E}\left[\sup_{u,v\in[0,T]}|X_{u}-X_{v}|\right]\nonumber \\
 & \quad+K_{q,\alpha}C_{p,h,\theta}^{q/p}A_{p,h}^{q/p}T^{h}\sum_{k=1}^{\infty}\frac{1}{k^{1-\delta}}\mathbb{E}\left(\left|\dfrac{T_{2k-1}\wedge T-T_{2(k-1)}\wedge T}{T}\right|^{\alpha}\right)^{1-q/p}.\label{Eqn-13}
\end{align}
Since $0<\delta<1-\frac{1}{ph}$, one may choose $q<p$ and then $\alpha$
such that $\frac{\delta}{h-1/p}<q<1/h$ and $\frac{\delta}{1-q/p}<\alpha<\frac{h-1/p}{1/q-1/p}<1$.
By (\ref{Eqn-13}) and Hölder's inequality,
\begin{align}
(b-a)\sum_{k=1}^{\infty}\frac{1}{k^{1-\delta}}\mathbb{P}\left(U_{a}^{b}(D)\ge k\right) & \le\left[\mathbb{E}\left(\sup_{u,v\in[0,T]}|X_{u}-X_{v}|^{p}\right)\right]^{1/p}\nonumber \\
 & \quad+K_{q,\alpha}C_{p,h,\theta}^{q/p}A_{p,h}^{q/p}T^{h}\sum_{k=1}^{\infty}\frac{1}{k^{1-\delta}}\mathbb{E}\left(\left|\dfrac{T_{2k-1}\wedge T-T_{2(k-1)}\wedge T}{T}\right|\right)^{\alpha(1-q/p)}\nonumber \\
 & \le C_{p,h,\theta}^{1/p}A_{p,h}^{1/p}T^{h}+K_{q,\alpha}C_{p,h,\theta}^{q/p}A_{p,h}^{q/p}T^{h}\left(\sum_{k=1}^{\infty}k^{-\frac{1-\delta}{1-\alpha(1-q/p)}}\right)^{1-\alpha(1-q/p)}\nonumber \\
 & \quad\times\left[\sum_{k=1}^{\infty}\mathbb{E}\left(\left|\dfrac{T_{2k-1}\wedge T-T_{2(k-1)}\wedge T}{T}\right|\right)\right]^{\alpha(1-q/p)}.\label{Eqn-21}
\end{align}
It follows from $\alpha>\frac{\delta}{1-q/p}$ that $\frac{1-\delta}{1-\alpha(1-q/p)}>1$.
Therefore, $\zeta\left(\frac{1-\delta}{1-\alpha(1-q/p)}\right)=\sum_{k=1}^{\infty}k^{-\frac{1-\delta}{1-\alpha(1-q/p)}}<\infty$.
Note that the sequence $T_{j}\wedge T$, $j\ge0$, is increasing and
bounded by $T$. We deduce that
\begin{align*}
\sum_{k=1}^{\infty}\mathbb{E}\left(\left|\dfrac{T_{2k-1}\wedge T-T_{2(k-1)}\wedge T}{T}\right|\right) & \le\sum_{k=1}^{\infty}\mathbb{E}\left(\dfrac{T_{2k}\wedge T-T_{2(k-1)}\wedge T}{T}\right)\\
 & =\mathbb{E}\left(\lim_{k\to\infty}\dfrac{T_{2k}\wedge T-T_{0}\wedge T}{T}\right)\\
 & \le1.
\end{align*}
Now (\ref{Eqn-21}) and the above yield that
\begin{equation}
(b-a)\sum_{k=1}^{\infty}\frac{1}{k^{1-\delta}}\mathbb{P}\left(U_{a}^{b}(D)\ge k\right)\le C_{p,h,\theta}^{1/p}A_{p,h}^{1/p}T^{h}+K_{q,\alpha}\zeta\left(\frac{1-\delta}{1-\alpha(1-q/p)}\right)^{1-\alpha(1-q/p)}C_{p,h,\theta}^{q/p}A_{p,h}^{q/p}T^{h}.\label{Eqn-23}
\end{equation}
Since 
\[
(k+1)^{\delta}-k^{\delta}=k^{\delta}\left[\left(1+\frac{1}{k}\right)^{\delta}-1\right]<k^{\delta}\cdot\frac{1}{k}=\frac{1}{k^{1-\delta}}\le\frac{2}{(k+1)^{1-\delta}},
\]
for any $0<\delta<1$, summing by parts, we deduce from (\ref{Eqn-23})
that 
\begin{align*}
\mathbb{E}\left(U_{a}^{b}(D)^{\delta}\right) & =\sum_{k=1}^{\infty}k^{\delta}\mathbb{P}\left(U_{a}^{b}(D)=k\right)\\
 & =\sum_{k=1}^{\infty}\left(k^{\delta}-(k-1)^{\delta}\right)\mathbb{P}\left(U_{a}^{b}(D)\ge k\right)\\
 & \le2\sum_{k=1}^{\infty}\frac{1}{k^{1-\delta}}\mathbb{P}\left(U_{a}^{b}(D)\ge k\right)\\
 & \le\frac{2}{b-a}\left(C_{p,h,\theta}^{1/p}A_{p,h}^{1/p}+K_{q,\alpha}\zeta\left(\frac{1-\delta}{1-\alpha(1-q/p)}\right)^{1-\alpha(1-q/p)}C_{p,h,\theta}^{q/p}A_{p,h}^{q/p}\right)T^{h}.
\end{align*}
This completes the proof.

By Fatou's lemma and Proposition \ref{Prop}, one has the following\end{proof}
\begin{thm}
Let $U_{a}^{b}$ be the up-crossing number of $X_{t}$ through $[a,b]$.
Then, for any $0<\delta<1-\frac{1}{ph}$,
\[
\mathbb{E}\left[\left(U_{a}^{b}\right)^{\delta}\right]<\frac{K_{\delta}}{b-a}T^{h},
\]
where the constant $K_{\delta}$ is the same as in Proposition \ref{Prop}.
In particular, $U_{a}^{b}<\infty$ a.s.
\end{thm}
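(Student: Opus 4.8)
The plan is to deduce the statement directly from Proposition~\ref{Prop}, which already furnishes a bound on $\mathbb{E}\!\left(U_{a}^{b}(D)^{\delta}\right)$ that is uniform over all finite sampling sets $D$, and then to pass to the limit. The structural fact that makes this work is the \emph{monotonicity of the discrete up-crossing number under refinement}: if $D\subseteq D'$ are two finite subsets of $[0,T]$, then $U_{a}^{b}(D)\le U_{a}^{b}(D')$. Indeed, inserting an additional sampling time into the ordered sequence of values $(Y_{t})_{t\in D}$ can only create new passages from below $a$ to above $b$ and can never remove an existing one; this is the standard refinement property of up-crossings of a finite sequence. I would record this as a one-line observation before the main argument.

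Granting this, I would fix a single countable dense subset $S=\{s_{1},s_{2},\dots\}$ of $[0,T]$ realizing the supremum in Definition~\ref{Def}, arrange (without loss of generality) that the endpoints $0,T$ belong to $S$, and set $D_{n}=\{s_{1},\dots,s_{n}\}$. Then $D_{1}\subseteq D_{2}\subseteq\cdots$ with $\bigcup_{n}D_{n}=S$, so by the refinement property the integer-valued sequence $U_{a}^{b}(D_{n})$ is nondecreasing, and precisely because of this monotonicity the supremum over all finite subsets collapses to the sequential limit:
\[
\lim_{n\to\infty}U_{a}^{b}(D_{n})=\sup_{D\subseteq S}U_{a}^{b}(D)=U_{a}^{b}.
\]

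With this in hand the conclusion is immediate. Since $x\mapsto x^{\delta}$ is continuous and nondecreasing on $[0,\infty)$, the nonnegative random variables $U_{a}^{b}(D_{n})^{\delta}$ increase to $\left(U_{a}^{b}\right)^{\delta}$. Proposition~\ref{Prop} gives $\mathbb{E}\!\left(U_{a}^{b}(D_{n})^{\delta}\right)<\tfrac{K_{\delta}}{b-a}T^{h}$ for every $n$, with a bound independent of $n$, so Fatou's lemma (equivalently, monotone convergence) yields
\[
\mathbb{E}\!\left[\left(U_{a}^{b}\right)^{\delta}\right]=\mathbb{E}\!\left[\liminf_{n\to\infty}U_{a}^{b}(D_{n})^{\delta}\right]\le\liminf_{n\to\infty}\mathbb{E}\!\left(U_{a}^{b}(D_{n})^{\delta}\right)\le\frac{K_{\delta}}{b-a}T^{h}.
\]
Finally, $\mathbb{E}\!\left[\left(U_{a}^{b}\right)^{\delta}\right]<\infty$ forces $\left(U_{a}^{b}\right)^{\delta}<\infty$, hence $U_{a}^{b}<\infty$, almost surely.

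The only genuine obstacle is the monotonicity-and-convergence step: one must justify carefully that refining the sample never decreases the up-crossing count and that the supremum over \emph{all} finite subsets of $S$ is attained as the limit along the increasing sequence $D_{n}$. Everything after that is a one-line application of Proposition~\ref{Prop} together with Fatou's lemma, the crucial point being that the bound $K_{\delta}T^{h}/(b-a)$ is uniform in $n$, so the limit is harmless; the same finite constant then controls $\mathbb{E}\!\left[\left(U_{a}^{b}\right)^{\delta}\right]$, which is exactly what is needed for the almost-sure finiteness of $U_{a}^{b}$.
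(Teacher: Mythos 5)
Your proof is correct and takes essentially the same route as the paper, whose entire argument is the single remark that the theorem follows ``by Fatou's lemma and Proposition \ref{Prop}.'' Your additional work --- the refinement monotonicity $U_{a}^{b}(D)\le U_{a}^{b}(D')$ for $D\subseteq D'$ and the exhaustion of the supremum along an increasing sequence $D_{n}$, so that Fatou/monotone convergence applies to the uniform bound --- simply makes explicit the step the paper leaves implicit.
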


\end{document}